\theoremstyle{plain}
\newtheorem{theorem}{Theorem}%[section]
\newtheorem{lemma}[theorem]{Lemma}
\newtheorem{proposition}[theorem]{Proposition}
\newtheorem{conjecture}[theorem]{Conjecture}
\newtheorem{question}[theorem]{Question}
\theoremstyle{definition}
\newtheorem{definition}[theorem]{Definition}
\newtheorem*{acknowledgements}{Acknowledgements}
\newtheorem*{remark}{Remark}
\DeclareMathOperator{\rk}{rk}
\DeclareMathOperator{\HFhat}{\widehat{HF}}
\DeclareMathOperator{\Homeo+}{Homeo^+}
\newcommand{\numset}[1]{\mathbb{#1}}
\newcommand{\Z}{\numset{Z}}
\newcommand{\Q}{\numset{Q}}
\newcommand{\R}{\numset{R}}
\definecolor{a-color}{RGB}{255,0,165}
\definecolor{b-color}{RGB}{0,165,255}
\definecolor{c-color}{RGB}{255,89,0}
\definecolor{d-color}{RGB}{0,255,89}
\definecolor{greengen-color}{RGB}{0,255,0}
\definecolor{redgen-color}{RGB}{255,0,0}
\definecolor{bluegen-color}{RGB}{0,0,255}
\definecolor{mu-color}{RGB}{165,0,255}
\definecolor{s-color}{RGB}{255,165,0}
\definecolor{ab-color}{RGB}{133,204,0}
\title{Non-left-orderable surgeries on twisted torus knots}
\author[K.~Christianson]{Katherine Christianson}
\address{Department of Mathematics, Columbia University, New York, NY 10027, 
  USA
}
\email{\href{mailto:mac2370@columbia.edu}{mac2370@columbia.edu}}
\author[J.~Goluboff]{Justin Goluboff}
\address{Department of Mathematics, Columbia University, New York, NY 10027, 
  USA
}
\email{\href{mailto:jg3221@columbia.edu}{jg3221@columbia.edu}}
\author[L.~Hamman]{Linus Hamann}
\address{Department of Mathematics, Columbia University, New York, NY 10027, 
  USA
}
\email{\href{mailto:dlh2145@columbia.edu}{dlh2145@columbia.edu}}
\author[S.~Varadaraj]{Srikar Varadaraj}
\address{Department of Mathematics, Columbia University, New York, NY 10027, 
  USA
}
\email{\href{mailto:sv2423@columbia.edu}{sv2423@columbia.edu}}
\subjclass[2010]{57M25 (20F60 57M50)}
\keywords{left-orderability, twisted torus knots, Dehn surgery}
\begin{document}
\begin{abstract}
Boyer, Gordon, and Watson \cite{BGW13} have conjectured that an
irreducible rational homology 3-sphere is an L-space if and only if
its fundamental group is not left-orderable. Since large classes of
L-spaces can be produced from Dehn surgery on knots in $S^3$, it is 
natural to ask what conditions on the knot group are sufficient to
imply that the quotient associated to Dehn surgery is not left-orderable.
Clay and Watson develop a criterion for determining the left-orderability
of this quotient group in \cite{CW13} and use it to verify the conjecture
for surgeries on certain L-space twisted torus knots. We generalize a recent
theorem of Ichihara and Temma \cite{IT14} to provide another such criterion. We 
then use this new criterion to generalize the results of Clay and Watson and 
to verify the conjecture for a much broader class of L-space twisted torus knots. 
\end{abstract}
\maketitle

\section{Introduction}
For a closed, connected, orientable 3-manifold $Y$, let $\HFhat(Y)$
denote the Heegaard Floer homology of $Y$, as defined in \cite{OSH04}.
We begin with a definition.
\begin{definition}
A closed, connected, orientable 3-manifold $Y$ is an \emph{L-space}
if it is a rational homology sphere satisfying 
$\rk\HFhat(Y)=|H_1(Y;\Z)|$.
\end{definition}
A result due to Ozsv\'ath and Szab\'o \cite[Proposition 5.1]{OSA04} gives
$\rk\HFhat(Y)\geq|H_1(Y;\Z)|$ for any $Y$. Thus, we can understand
L-spaces as spaces with minimal Heegaard Floer homology. L-spaces
derive their name from lens spaces, which were the first class of
spaces observed to have minimal Heegaard Floer homology; however, many other
spaces, such as those which admit an elliptic geometry
\cite[Proposition 2.3]{OS05} are also L-spaces. 

It is interesting to consider whether L-spaces may be characterized using
properties unrelated to their Heegaard Floer homologies. We recall the
following definition.
\begin{definition}
A nontrivial group $G$ is \emph{left-orderable} if there exists a strict total
ordering $>$ of the elements of $G$ that is left-invariant: whenever $g>h$
then $fg>fh$, for all $g,h,f\in G$. 
\end{definition}
Boyer, Gordon, and Watson established that a closed, connected,
Seifert fibred 3-manifold is an L-space if and only if its
fundamental group cannot be left-ordered \cite{BGW13}. After providing further examples
to support this correspondence, they proposed the following conjecture.
\begin{conjecture}[{\cite[Conjecture 3]{BGW13}}] \label{conj:BGW}
An irreducible rational homology 3-sphere is an L-space if and only
if its fundamental group is not left-orderable. 
\end{conjecture}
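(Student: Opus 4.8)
The plan is to establish the two directions of the equivalence separately, routing both through the existence of a coorientable taut foliation, which is the natural intermediary between the Heegaard Floer and the orderability sides. Write $Y$ for the irreducible rational homology sphere in question, and consider three conditions: (F) $Y$ admits a coorientable taut foliation; (HF) $Y$ is \emph{not} an L-space; and (LO) $\pi_1(Y)$ is left-orderable. Two implications \emph{out of} (F) are already available in the literature, and I would take them as inputs. First, (F) $\Rightarrow$ (HF): by Eliashberg--Thurston a taut foliation perturbs to a weakly symplectically fillable contact structure, whose Ozsv\'ath--Szab\'o contact invariant is nonzero, forcing $\rk\HFhat(Y)>|H_1(Y;\Z)|$, so $Y$ is not an L-space. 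Second, (F) $\Rightarrow$ (LO): a coorientable taut foliation produces a universal circle action of $\pi_1(Y)$, and since $Y$ is a rational homology sphere its real Euler class vanishes, so this action lifts to a nontrivial action on $\R$; by the Boyer--Rolfsen--Wiest criterion $\pi_1(Y)$ is then left-orderable.

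Given these, the conjecture reduces to the two converse implications \emph{into} (F), namely (HF) $\Rightarrow$ (F) and (LO) $\Rightarrow$ (F). For (LO) $\Rightarrow$ (F) I would start from a left-ordering of the countable group $\pi_1(Y)$, convert it into a faithful action on $\R$ by orientation-preserving homeomorphisms, and then attempt to geometrize this dynamics as the holonomy of a genuine transversely oriented codimension-one taut foliation of $Y$, compatible with the topology of $Y$. For (HF) $\Rightarrow$ (F) I would have to construct a taut foliation directly from the failure of the L-space condition, i.e.\ from the strict inequality $\rk\HFhat(Y)>|H_1(Y;\Z)|$; this is precisely the Floer-theoretic half of the full L-space conjecture. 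Composing (LO) $\Rightarrow$ (F) $\Rightarrow$ (HF) gives one direction and (HF) $\Rightarrow$ (F) $\Rightarrow$ (LO) gives the other, closing the equivalence.

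The main obstacle is exactly these two converse steps: passing \emph{to} a taut foliation. There is no general mechanism for turning an abstract order on $\pi_1(Y)$, or the mere rank inequality in Heegaard Floer homology, into an actual foliation realizing the required leaf-space dynamics, and both implications remain open in full generality. I would therefore expect real progress only after stratifying $Y$ by geometrization. The Seifert fibred case is already settled by Boyer--Gordon--Watson, so the difficulty concentrates in the hyperbolic and nontrivial-JSJ cases; there one must control the bordered and sutured Floer invariants together with the orderability of each geometric piece and then show these data remain compatible across the gluing tori. The hyperbolic case, where no a priori model for the foliation is available, is the decisive hard core, and I do not expect a uniform argument covering it to fall out of the framework above without substantial new input.
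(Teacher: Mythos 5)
You were asked about Conjecture~\ref{conj:BGW}, which is exactly that: an open conjecture (Conjecture 3 of Boyer--Gordon--Watson). The paper neither proves it nor claims to; its contribution is to \emph{verify instances} of it, namely to show via the fixed-point criterion of Theorem~\ref{thm:CGHVCond} that large surgeries on the L-space twisted torus knots $T_{p,pk\pm1}^{p-1,m}$ and $T_{p,pk\pm1}^{p-2,1}$ have non-left-orderable fundamental groups (Theorem~\ref{thm:CGHV1}), consistently with Theorem~\ref{thm:Vaf}. So there is no paper proof to compare against, and your proposal is rightly calibrated in its bottom line: you route both directions through the existence of a coorientable taut foliation and you concede that the two implications \emph{into} (F) are open. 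That concession is correct --- those implications are the unresolved core of the L-space conjecture --- and nothing in your sketch, or in the paper, closes the equivalence.

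One substantive correction, though: of the two inputs you claim as ``already available,'' only one is. (F)~$\Rightarrow$~(HF) is indeed a theorem (Ozsv\'ath--Szab\'o via Eliashberg--Thurston, with later refinements handling low regularity). But (F)~$\Rightarrow$~(LO) is \emph{not} known in general, and your Euler-class step is where it breaks. The universal-circle construction (Calegari--Dunfield) applies to atoroidal manifolds, and even granting a faithful $\pi_1(Y)$-action on $S^1$, the obstruction to lifting it to an action on $\R$ along the central extension $\widetilde{\mathrm{Homeo}}{}^+(S^1)\to \mathrm{Homeo}^+(S^1)$ is the \emph{integral} Euler class in $H^2(\pi_1(Y);\Z)$. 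For a rational homology sphere $H^2(Y;\Z)$ is finite but generally nonzero, so vanishing of the \emph{real} Euler class --- which is all that rational homology gives you --- does not kill a torsion obstruction. Hence ``(F)~$\Rightarrow$~(LO)'' belongs to the open web of the conjecture rather than to its known perimeter, and your scheme reduces the conjecture to \emph{three} open implications, not two. The honest summary is that your proposal is a reasonable survey of the surrounding program, with one overstated input, and --- as you yourself say --- it contains no proof; neither does the paper, which treats the conjecture purely as motivation for the theorems it actually proves.
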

In order to investigate this conjecture, it is useful to consider
Dehn surgery on knots in $S^3$, since this process provides large classes
of 3-manifolds. Boyer, Rolfsen, and Weist \cite[Theorem 1.1]{BRW05} 
demonstrated that the fundamental group of a $P^2$-irreducible, connected,
compact $3$-manifold is left-orderable if and only if it has a nontrivial homomorphic
image which is left-orderable. Since the abelianization of any knot group is
$\Z$, we have that any knot group is left-orderable. However, the fundamental
group of a manifold produced by Dehn surgery is a quotient of the knot group,
which may or may not be left-orderable.
In light of these observations and Conjecture~\ref{conj:BGW}, it is
natural to ask the following question (cf. \cite[Question 1.4]{CW13}).
\begin{question}
Given a knot $K$ in $S^3$ and a rational number $r$, what conditions
on the knot group of $K$ are sufficient to imply that $r$-surgery on
$K$ yields a manifold with non-left-orderable fundamental group?
\label{question}
\end{question}
In \cite{CW13}, Clay and Watson answer Question~\ref{question} with
the following sufficient condition. We denote by $S_K^3(r)$
the manifold produced by $r$-surgery on a knot $K$. 
\begin{theorem}[{\cite[Theorem 1.5]{CW13}}]
Let $K$ be a nontrivial knot in $S^3$, let $\mu$ and $\lambda$ be a
meridian and 0-framed longitude, respectively, of $K$, and let 
$\frac{p_0}{q_0}, \frac{p_1}{q_1} \in \Q^+$ with $p_i, q_i > 0$. If 
$\mu^{p_0}\lambda^{q_0} > 1$ implies $\mu^{p_1}\lambda^{q_1} > 1$
for every left ordering $>$ of the knot group of $K$, then 
$\pi_1(S_K^3(p/q))$ is not left-orderable for any $\frac{p}{q} \in \Q^+$
such that $p, q > 0$ and
$\frac{p}{q} \in \left( \frac{p_0}{q_0}, \frac{p_1}{q_1} \right)$. 
\label{thm:CWCond}
\end{theorem}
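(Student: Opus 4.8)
The plan is to argue by contradiction. Write $G=\pi_1(S^3\setminus K)$ for the knot group, fix $\tfrac{p}{q}\in\left(\tfrac{p_0}{q_0},\tfrac{p_1}{q_1}\right)$ in lowest terms with $p,q>0$, and let $z=\mu^p\lambda^q$ be the element carried by the surgery slope, so that $\pi_1(S_K^3(p/q))=G/N$ with $N=\langle\langle z\rangle\rangle$. Since a trivial group is not left-orderable by convention, I may assume $\bar G:=G/N$ is nontrivial and, for contradiction, left-orderable. The goal is to convert the given left order on $\bar G$ into a left order on $G$ in which $\mu^{p_0}\lambda^{q_0}>1$ while $\mu^{p_1}\lambda^{q_1}<1$, directly contradicting the hypothesis.

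The heart of the argument is to identify the image $\bar P$ of the peripheral subgroup $P=\langle\mu,\lambda\rangle\cong\Z^2$ in $\bar G$. Because $z$ dies in $\bar G$ and $\gcd(p,q)=1$, the quotient $\Z^2/\langle(p,q)\rangle\cong\Z$ surjects onto $\bar P$, so $\bar P$ is cyclic. Two facts then pin it down. First, a meridian normally generates the knot group, so if $\pi(\mu)=1$ we would have $N=\langle\langle\mu\rangle\rangle=G$ and $\bar G$ trivial; hence $\pi(\mu)\neq1$ and $\bar P$ is nontrivial. Second, $\bar G$ is torsion-free (being left-orderable), so the nontrivial cyclic group $\bar P$ cannot be finite, forcing $\bar P\cong\Z$ with the displayed surjection an isomorphism. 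Under the identification $(x,y)\mapsto qx-py$, the inequalities $\tfrac{p_0}{q_0}<\tfrac{p}{q}<\tfrac{p_1}{q_1}$ give $qp_0-pq_0<0$ and $qp_1-pq_1>0$, so $\pi(\mu^{p_0}\lambda^{q_0})$ and $\pi(\mu^{p_1}\lambda^{q_1})$ are nonzero elements of \emph{opposite} sign in $\bar P\cong\Z$. As the only left orders on $\Z$ are the standard one and its reverse, these two images have opposite signs in the restriction to $\bar P$ of every left order on $\bar G$.

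To finish, fix a left order $\succ$ on $\bar G$ and any left order on $N$ (which exists since $N$ is a subgroup of the left-orderable group $G$). Assemble the lexicographic order $>$ on $G$ associated to $1\to N\to G\to\bar G\to1$ by declaring $g>1$ exactly when $\pi(g)\succ1$, or else $\pi(g)=1$ and $g$ is positive in $N$; one checks directly that its positive cone is a semigroup, so this is a genuine left order. Since $\pi(\mu^{p_0}\lambda^{q_0})$ and $\pi(\mu^{p_1}\lambda^{q_1})$ are nontrivial in $\bar G$, the $>$-signs of $\mu^{p_0}\lambda^{q_0}$ and $\mu^{p_1}\lambda^{q_1}$ equal their $\succ$-signs and are therefore opposite. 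If $\mu^{p_0}\lambda^{q_0}>1>\mu^{p_1}\lambda^{q_1}$, this contradicts the hypothesis at once; in the remaining case I pass to the reverse order, which is again a left order and swaps both signs, yielding the same contradiction.

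I expect the only genuine subtlety to be the step showing $\bar P$ is infinite cyclic rather than trivial or finite—that is, ruling out the degenerate scenario in which the Dehn filling collapses the entire peripheral torus. This is precisely where the normal-generation property of the meridian and the torsion-freeness of left-orderable groups do the real work; once $\bar P\cong\Z$ is in hand, the sign computation and the lexicographic lift are routine. I should also record the two elementary facts invoked: that an extension of a left-orderable group by a left-orderable group is left-orderable via the lexicographic positive cone, and that the reverse of a left order is again a left order.
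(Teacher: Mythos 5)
The paper itself contains no proof of Theorem~\ref{thm:CWCond}: it is quoted from Clay and Watson \cite{CW13}, so your attempt can only be compared against the original argument there. Your proof is correct, and it reconstructs essentially the Clay--Watson strategy. Both arguments rest on the same two pillars: first, if the filled manifold had left-orderable fundamental group, the order on the quotient would lift lexicographically to a left order on the knot group $G$ in which $N=\langle\langle\mu^p\lambda^q\rangle\rangle$ sits as a convex subgroup --- and, as you correctly note, left-orderability (unlike bi-orderability) is closed under extensions with no conjugation-invariance condition on the cone of $N$; second, in any such order the sign of a peripheral element $\mu^a\lambda^b$ with $qa-pb\neq 0$ is read off from the integer $qa-pb$, because the image of the peripheral subgroup in the quotient is infinite cyclic. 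Your handling of the one genuinely delicate point --- ruling out a trivial image of the peripheral subgroup via normal generation of $G$ by the meridian, ruling out a finite image via torsion-freeness of left-orderable groups, and concluding that the surjection $\Z\to\bar P$ is an isomorphism --- is exactly what is needed, and the endgame (the points $(p_0,q_0)$ and $(p_1,q_1)$ lie strictly on opposite sides of the line through $(p,q)$, so their images have opposite signs; reverse the order if necessary) matches the original. One small point worth a half-sentence in your write-up: when you invoke a left order on $N$, its nontriviality follows from the nontriviality of $K$ (the peripheral $\Z^2$ injects into $G$, so $\mu^p\lambda^q\neq 1$); this is harmless, since the lexicographic construction degenerates gracefully even when $N$ is trivial, but as stated your appeal to ``$N$ is a subgroup of a left-orderable group'' silently assumes $N\neq 1$.
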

In order to consider other sufficient conditions that answer
Question~\ref{question}, we require the following well-known equivalent
condition for left-orderability (see, for instance, \cite[Theorem~6.8]{Ghy01}).
\begin{theorem}
Let $G$ be a countable group. Then the following are equivalent:
\begin{itemize}
\item $G$ acts faithfully on the real line by order-preserving homeomorphisms.
\item $G$ is left-orderable.
\label{thm:LOCond}
\end{itemize}
\end{theorem}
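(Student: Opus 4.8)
The plan is to prove the two implications separately. I will focus on the substantive case in which $G$ is countably infinite; this loses nothing, since a nontrivial left-orderable group and the group of order-preserving homeomorphisms of $\R$ are both torsion-free, hence infinite. I expect the implication \emph{order-preserving action $\Rightarrow$ left-orderability} to be routine, while the reverse implication, which amounts to the \emph{dynamical realization} of a left-order, will require the real work.

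Suppose first that $G$ acts faithfully on $\R$ by order-preserving homeomorphisms. Fixing an enumeration $\Q = \{x_0, x_1, x_2, \ldots\}$ of a countable dense subset, I would associate to each $g \in G$ the sequence $(g\cdot x_n)_{n\ge 0}$. Faithfulness together with density guarantees this assignment is injective: if $g\cdot x_n = h\cdot x_n$ for all $n$, then $h^{-1}g$ fixes a dense set and hence, being a homeomorphism, equals $\id$. I would then declare $g > h$ precisely when $g\cdot x_n > h\cdot x_n$ at the least index $n$ where the two sequences differ, i.e.\ the lexicographic order pulled back from the space of sequences. Totality and transitivity follow from the corresponding properties of the lexicographic order, each pair of distinct sequences having a well-defined first point of disagreement. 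Left-invariance is the one place the hypothesis on the action is used: if the first disagreement of $g$ and $h$ occurs at index $n_0$, then $f\cdot(g\cdot x_n)=f\cdot(h\cdot x_n)$ for $n<n_0$, while order-preservation of $f$ gives $f\cdot(g\cdot x_{n_0})>f\cdot(h\cdot x_{n_0})$, so $fg>fh$.

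For the converse, suppose $<$ is a left-order on $G$ and fix an enumeration $G = \{g_0, g_1, \ldots\}$ with $g_0 = \id$. First I would build an order-embedding $t\colon (G,<)\into(\R,<)$ by the standard inductive construction: set $t(g_0)=0$, and having placed $g_0,\ldots,g_n$, place $g_{n+1}$ at $\max+1$, at $\min-1$, or at the midpoint of the values of its immediate neighbours among $g_0,\dots,g_n$, according to whether $g_{n+1}$ lies above all, below all, or between two of them. Since left multiplication by any $g$ is order-preserving (this is exactly left-invariance of $<$), the formula $g\cdot t(g_i) := t(g g_i)$ defines an order-preserving bijection of the countable set $t(G)$, and faithfulness is immediate because $g\cdot t(\id)=t(g)\ne t(\id)$ whenever $g\ne\id$.

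The main obstacle is upgrading this action on $t(G)$ to a genuine homomorphism $G \to \mathrm{Homeo}^+(\R)$. I would first extend each $g$ by continuity and monotonicity to an order-preserving homeomorphism of the closure $\overline{t(G)}$, which is automatic for an order-preserving bijection of a subset of $\R$. Each element then permutes the complementary intervals (the ``gaps'') of $\overline{t(G)}$, and I would extend the action across each gap by the unique increasing affine map onto the image gap. The key point to verify is equivariance: because a composition of increasing affine maps is again the increasing affine map between the corresponding endpoints, the gap-by-gap extensions of $g$ and $h$ compose to the prescribed extension of $hg$, so the assignment is a homomorphism. Patching the affine pieces with the homeomorphism on $\overline{t(G)}$ yields a strictly increasing continuous bijection of $\R$ for each $g$, and the action remains faithful since it already moves $t(\id)$. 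Handling unbounded complementary rays, should $t(G)$ fail to be cofinal, and checking continuity at the gap endpoints are the remaining routine points.
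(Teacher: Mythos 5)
Your first implication is fine: pulling back the lexicographic order along an enumeration of $\Q$ is the standard argument, faithfulness plus density gives injectivity of $g \mapsto (g\cdot x_n)_n$, and order-preservation of $f$ gives left-invariance. (For what it is worth, the paper does not prove this theorem at all --- it cites it from Ghys --- so your proposal must stand on its own.) The converse, however, has a genuine gap at exactly the step you declare automatic: the claim that an order-preserving bijection of a subset of $\R$ always extends to an order-preserving homeomorphism of its closure. This is false. Take $A = (\Q\cap(0,1)) \cup (\Q\cap(2,3))$, which as an abstract linear order is isomorphic to $\Q$ by Cantor's theorem. There is an order-automorphism $f$ of $A$ carrying the cut of $A$ at $1/\sqrt{2}$ (realized in $\R$ with zero gap) to the cut separating the two blocks (realized with the gap $[1,2]$): simply choose order-isomorphisms $\Q\cap(0,1/\sqrt{2}) \to \Q\cap(0,1)$ and $A\cap(1/\sqrt{2},3) \to \Q\cap(2,3)$ and take their union. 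Then $\sup_{a<1/\sqrt{2}} f(a) = 1 < 2 = \inf_{a>1/\sqrt{2}} f(a)$, so $f$ ``tears'' the point $1/\sqrt{2}$ and admits no continuous extension to $\overline{A}$. So order-preservation alone does not suffice; you must rule out tears for left multiplication on $t(G)$.

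This is precisely what the midpoint/$\pm 1$ construction of $t$ is for --- it is not merely a convenient way to produce some order-embedding, and any order-embedding will not do. The missing lemma is: for every cut $(L,R)$ of $(G,<)$ in which $L$ has no maximum or $R$ has no minimum, one has $\sup t(L) = \inf t(R)$; equivalently, the only gaps of $\overline{t(G)}$ come from jumps of the order (pairs $a<b$ with nothing strictly between them). The proof uses the construction: at any stage, the already-placed points of $L$ and $R$ nearest the cut are adjacent among placed points, so the first element of the enumeration lying strictly between them is placed at the midpoint of the interval they bound, halving it; since $L$ has no maximum (or $R$ no minimum), infinitely many group elements lie strictly between the current nearest pair, so this halving occurs infinitely often and the gap at the cut has length zero. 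Granted this lemma, left multiplication preserves jumps and preserves gap-free cuts, so the extension to $\overline{t(G)}$ is well-defined and continuous, and your affine interpolation and equivariance argument then goes through. (Your two ``remaining routine points'' really are routine; in fact $t(G)$ is automatically unbounded in both directions, since a nontrivial left-ordered group has no greatest or least element, so the $\max+1$ and $\min-1$ rules fire infinitely often.)
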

Let us denote by $\Homeo+(\R)$ the group of order-preserving homeomorphisms
of $\R$. Then, the first condition in Theorem~\ref{thm:LOCond} is equivalent
to the existence of an injective homomorphism
$\Phi: G \rightarrow \Homeo+(\R)$. For such homomorphisms, we will sometimes abuse
notation and write $gt$ for $\Phi(g)t$ for elements $g \in G$ and $t \in \R$.

We are interested in studying global fixed points of such a homomorphism,
i.e. points $t \in \R$ such that $\Phi(g)t = t$ for all $g \in G$. The following
lemma due to Boyer, Rolfson, and Weist demonstrates the importance of these points.
\begin{lemma}[{\cite[Lemma 5.1]{BRW05}}]
If there is a homomorphism $\Phi: G \rightarrow \Homeo+(\R)$ with nontrivial image,
then there is another such homomorphism with no global fixed points.
\label{lem:BRW}
\end{lemma}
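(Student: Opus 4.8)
The plan is to work directly with the set of global fixed points of $\Phi$ and then restrict the action to a region of $\R$ from which no global fixed point can survive. First I would set
\[
F = \{\, t \in \R : \Phi(g)t = t \text{ for all } g \in G \,\}.
\]
Since each $\Phi(g)$ is a homeomorphism, its fixed-point set is closed, so $F$ is an intersection of closed sets and hence closed. If $F = \R$, then every $\Phi(g)$ is the identity and the image of $\Phi$ is trivial, contradicting the hypothesis; therefore $F$ is a \emph{proper} closed subset of $\R$, and $\R \setminus F$ is a nonempty open set.

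Next I would use the order structure to localize the action. Write $\R \setminus F$ as a disjoint union of its connected components, each an open interval $I = (a,b)$ with $-\infty \le a < b \le +\infty$ whose finite endpoints lie in $F$. The key step is to check that every $\Phi(g)$ maps each such $I$ onto itself: since $\Phi(g)$ is order-preserving and fixes the endpoints $a$ and $b$ (these are in $F$, or are $\pm\infty$, which is preserved by any order-preserving homeomorphism), it must carry $(a,b)$ bijectively to $(a,b)$. Fixing one component $I$, this invariance lets me restrict $\Phi$ to a homomorphism $r \colon G \to \operatorname{Homeo}^+(I)$ given by $r(g) = \Phi(g)|_I$, the action of $G$ on $I$ by order-preserving homeomorphisms.

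Finally, I would transport this restricted action back to all of $\R$. Choose an order-preserving homeomorphism $h \colon I \to \R$ and define $\Psi \colon G \to \operatorname{Homeo}^+(\R)$ by $\Psi(g) = h \circ r(g) \circ h^{-1}$; being the conjugate of a homomorphism by a fixed order-preserving homeomorphism, $\Psi$ is again a homomorphism into $\operatorname{Homeo}^+(\R)$. To see that $\Psi$ has no global fixed point, note that $t_0 \in \R$ is fixed by every $\Psi(g)$ if and only if $h^{-1}(t_0) \in I$ is fixed by every $r(g) = \Phi(g)|_I$, i.e.\ if and only if $h^{-1}(t_0)$ is a global fixed point of $\Phi$ that lies in $I$. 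But $I \subseteq \R \setminus F$ contains no global fixed point of $\Phi$, so no such $t_0$ exists, and $\Psi$ is the required homomorphism. (It automatically has nontrivial image, since a homomorphism with no global fixed point cannot be trivial.) I expect the only genuine care to lie in the invariance claim of the second step and in translating the fixed-point condition cleanly through the conjugation $h$; everything else is bookkeeping.
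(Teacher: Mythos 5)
Your proof is correct, and it is essentially the standard argument: the paper itself does not prove this lemma (it is quoted from Boyer--Rolfsen--Wiest \cite{BRW05}), and the proof given there proceeds exactly as you do, by passing to a complementary interval of the closed set of global fixed points and conjugating the restricted action by a homeomorphism of that interval with $\R$. All the key points --- properness of the fixed-point set, invariance of each complementary component, and the translation of fixed points through the conjugation --- are handled correctly in your write-up.
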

With this lemma and Theorem~\ref{thm:LOCond}, the following criterion for
non-left-orderability is straightforward.
\begin{proposition}
If $G$ is a countable group and every homomorphism $\Phi: G \rightarrow
\Homeo+(\R)$ has a global fixed point, then $G$ is not left-orderable.
\label{prop:NonLOCond}
\end{proposition}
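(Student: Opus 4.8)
The plan is to prove the contrapositive: assuming $G$ is left-orderable, I would construct a homomorphism $G \to \Homeo+(\R)$ with no global fixed point, directly contradicting the hypothesis. Since left-orderability is defined only for nontrivial groups, I may assume $G \neq \{1\}$ throughout, and this nontriviality will turn out to be the one hypothesis that must be handled with care.

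First I would invoke Theorem~\ref{thm:LOCond}. Because $G$ is countable and left-orderable, it acts faithfully on $\R$ by order-preserving homeomorphisms; equivalently, there is an injective homomorphism $\Phi \colon G \to \Homeo+(\R)$. Next I would observe that this $\Phi$ has nontrivial image: since $G$ is nontrivial and $\Phi$ is injective, $\Phi(G) \cong G$ is itself nontrivial. This is precisely the step where the convention that left-orderable groups are nontrivial is essential, for otherwise the only map available would be the trivial homomorphism, which fixes every point of $\R$, and the next step would be unavailable.

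With a homomorphism of nontrivial image in hand, I would apply Lemma~\ref{lem:BRW} to produce a second homomorphism $\Psi \colon G \to \Homeo+(\R)$ having no global fixed point. This contradicts the standing hypothesis that every homomorphism $G \to \Homeo+(\R)$ fixes some point of $\R$. Hence the assumption that $G$ is left-orderable is untenable, which is exactly the claim that $G$ is not left-orderable.

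The argument is essentially a bookkeeping chain linking the three prior results — Theorem~\ref{thm:LOCond} to extract an injective action, injectivity to guarantee nontrivial image, and Lemma~\ref{lem:BRW} to remove the global fixed point — so there is no substantive obstacle. The only point that genuinely demands attention is confirming that the hypotheses of Lemma~\ref{lem:BRW}, in particular the nontriviality of $\Phi(G)$, are actually satisfied, and this follows immediately from injectivity together with $G \neq \{1\}$.
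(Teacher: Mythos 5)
Your proof is correct and follows essentially the same route as the paper's: assume left-orderability, use Theorem~\ref{thm:LOCond} to get an injective homomorphism into $\Homeo+(\R)$, and apply Lemma~\ref{lem:BRW} to obtain a fixed-point-free homomorphism, contradicting the hypothesis. Your explicit verification that the image is nontrivial (via injectivity and the convention that left-orderable groups are nontrivial) is a detail the paper leaves implicit, and it is a worthwhile addition rather than a departure.
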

\begin{proof}
By contradiction, assume $G$ is left-orderable. Then, by Theorem~\ref{thm:LOCond},
there exists $\Phi: G \rightarrow \Homeo+(\R)$ injective. We can then apply
Lemma~\ref{lem:BRW} to conclude that there exists 
$\Phi': G \rightarrow \Homeo+(\R)$ with no global fixed points.
But this contradicts our hypothesis.
\end{proof}
Ichihara and Temma use exactly the reasoning of Proposition~\ref{prop:NonLOCond}
in \cite{IT14} to demonstrate the following criterion for non-left-orderability
of the fundamental groups of surgery manifolds. Their work was motivated by 
that of Nakae in \cite{Nak13}.
\begin{theorem}[\cite{IT14}]
Let $K$ be a knot in $S^3$. Suppose that the knot group $\pi_1(S^3 - K)$
is of the form
\[\langle x,y \mid (w_1x^{c}w_1^{-1})y^{-r}(w_2^{-1}x^{d}w_2)y^{r-\ell}, \mu\lambda\mu^{-1}\lambda^{-1} \rangle\]
where $c,d \geq 0$, $r \in \Z$, $\ell \geq 0$, $\mu = x$, 
$\lambda = x^{-m}wx^{-n}$, $w$ is a word which excludes $x^{-1}$ and
$y^{-1}$, $m,n \geq 0$, and $p/q \geq m+n$.
Then, Dehn surgery along the slope $p/q$ yield a closed 3-manifold
with non-left-orderable fundamental group.
\label{thm:ITCond}
\end{theorem}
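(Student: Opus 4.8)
The plan is to verify the hypothesis of Proposition~\ref{prop:NonLOCond}: I will show that \emph{every} homomorphism $\Phi \colon \pi_1(S^3_K(p/q)) \to \Homeo+(\R)$ possesses a global fixed point, whence non-left-orderability follows at once. Write $G = \pi_1(S^3_K(p/q))$; this is the quotient of the given knot group by the surgery relation $\mu^p\lambda^q$, and since $G$ is generated by the images of $x$ and $y$, a point fixed by both $X := \Phi(x)$ and $Y := \Phi(y)$ is automatically a global fixed point. Set $W := \Phi(w)$ and $L := \Phi(\lambda) = X^{-m}WX^{-n}$. The two structural relations I will exploit are the peripheral commutation $XL = LX$ together with the surgery relation $X^pL^q = \id$, and the main relator, which I rewrite as $Y^{\ell} = (W_1X^cW_1^{-1})\cdot Y^{-r}(W_2^{-1}X^dW_2)Y^{r}$.

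First I record the elementary facts I will use about $\Homeo+(\R)$: if $f,g$ are increasing with $f\geq\id$ and $g\geq\id$ pointwise then $fg\geq\id$, and $f\leq g$ pointwise implies $f^{-1}\geq g^{-1}$ and $f^q\leq g^q$; moreover, if $f$ has no fixed point then the integer exponent $a$ in $f^a(t)$ is strictly monotone. From $X^pL^q=\id$ and $XL=LX$ one deduces $\mathrm{Fix}(X)=\mathrm{Fix}(L)$: if $X(t)=t$ then $L^q(t)=X^{-p}(t)=t$, forcing $L(t)=t$, and conversely. Thus $X$ and $L$ share fixed points, and on complementary intervals the relation $X^p=L^{-q}$ ties their displacement directions together.

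The crux is to show $\mathrm{Fix}(X)\neq\emptyset$. Suppose not; then $X>\id$ everywhere or $X<\id$ everywhere, and I treat the first case (the second being symmetric). Because $c,d,\ell\geq 0$, each conjugate $W_1X^cW_1^{-1}$ and $W_2^{-1}X^dW_2$ satisfies $\geq\id$, so the rewritten main relator exhibits $Y^{\ell}$ as a product of maps $\geq\id$; hence $Y^{\ell}\geq\id$, and therefore $Y\geq\id$. Since $w$ excludes $x^{-1}$ and $y^{-1}$, $W$ is a composition of non-negative powers of $X\geq\id$ and $Y\geq\id$, so $W\geq\id$, whence $L=X^{-m}WX^{-n}\geq X^{-(m+n)}$ pointwise. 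Inverting and taking $q$-th powers yields $X^p=L^{-q}\leq X^{q(m+n)}$, and strict monotonicity of the exponent forces $p\leq q(m+n)$, i.e.\ $p/q\leq m+n$. This contradicts the hypothesis $p/q\geq m+n$, with the borderline equality collapsing to $W=\id$, which is excluded once $w$ genuinely involves $y$. I expect this fixed-point-free analysis, and in particular the careful bookkeeping at the borderline slope and for the degenerate parameter values (such as $\ell=0$ or $c=d=0$), to be the main obstacle.

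Finally, granting $\mathrm{Fix}(X)\neq\emptyset$, I finish by showing every $c\in\mathrm{Fix}(X)$ is a global fixed point. Since $\mathrm{Fix}(X)=\mathrm{Fix}(L)$ we have $L(c)=c$; as $X^{\pm m}$ and $X^{\pm n}$ fix $c$, expanding $L=X^{-m}WX^{-n}$ gives $W(c)=c$. Now suppose $Y(c)>c$. Reading $W$ letter by letter from the right and using that $X(c)=c$ while $Y(c)>c$, monotonicity keeps every partial image $\geq c$ and pushes it strictly above $c$ as soon as the first $y$ is applied; such a letter must occur, since otherwise $\lambda$ would be a power of $\mu$, which is impossible for a nontrivial knot. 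Hence $W(c)>c$, a contradiction, and the inequality $Y(c)<c$ is excluded symmetrically, so $Y(c)=c$. Thus $c$ is fixed by both $X$ and $Y$, hence by all of $G$, giving the desired global fixed point and completing the argument via Proposition~\ref{prop:NonLOCond}.
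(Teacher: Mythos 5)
Your overall strategy---apply Proposition~\ref{prop:NonLOCond} and split into the cases where $\Phi(x)$ has a fixed point and where it is fixed-point free---is the same as the paper's (the paper proves the abstracted version, Theorem~\ref{thm:CGHVCond}, whose proof it states closely follows that of Theorem~\ref{thm:ITCond}). Your fixed-point case is correct and matches the paper's: you push a common fixed point of $X$ and $L$ through the positive word $w$ letter by letter, where the paper does the same thing with $s^{-q}$ and negative letters. The genuine problem is in the fixed-point-free case, at the borderline slope.

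Concretely: assuming $X > \id$ everywhere, your chain $W \geq \id$, $L \geq X^{-(m+n)}$, $X^p = L^{-q} \leq X^{q(m+n)}$ yields only $p/q \leq m+n$, which is \emph{compatible} with the hypothesis $p/q \geq m+n$ when $p/q = m+n$---a value the theorem explicitly allows. Your proposed repair, that equality forces $W = \id$ and that this ``is excluded once $w$ genuinely involves $y$,'' does not work: at this stage of the argument you know only $Y \geq \id$, and $Y$ could literally be the identity homeomorphism, so the presence of $y$'s in $w$ is perfectly consistent with $W = \id$. The strictness must instead come from an $x$ in $w$: since $X > \id$ everywhere, a single $x$ in $w$ gives $W > \id$ everywhere, hence $L > X^{-(m+n)}$ pointwise, hence $X^p = L^{-q} < X^{q(m+n)}$ pointwise, i.e.\ $p < q(m+n)$, which contradicts $p/q \geq m+n$ including at equality. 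This is precisely how the paper arranges the argument: Theorem~\ref{thm:CGHVCond} hypothesizes that the $v$-framed longitude $s$ contains at least one $x$ (here $s = w = \mu^{m+n}\lambda$, so $v = m+n$), and its proof uses this to get $st > t$ for all $t$, whence $s^qt = x^{qv-p}t$ forces $qv - p > 0$, i.e.\ $p/q < v$, a clean contradiction with $p/q \geq v$. To prove Theorem~\ref{thm:ITCond} as literally stated you would need to either justify that $w$ contains an $x$ or dispose of the case $W = Y^j$ by a separate argument. A second, smaller gap you flag but never close: your derivation of $Y \geq \id$ from $Y^{\ell} \geq \id$ requires $\ell \geq 1$, while the statement permits $\ell = 0$; that case must actually be ruled out as degenerate (it forces $c = d = 0$ and an effectively trivial relator, so no such presentation is a knot group), not merely noted as an obstacle.
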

The criteria used in the proof of Theorem~\ref{thm:ITCond} actually apply
to a more general class of group presentations. The main result of our paper is
an extraction of these criteria which reframes the theorem in a more widely
applicable manner. The proof of Theorem~\ref{thm:CGHVCond} closely follows the
proof of Theorem~\ref{thm:ITCond}.
\begin{theorem} \label{thm:CGHVCond}
Let $K$ be a nontrivial knot in $S^3$. Let G denote the knot group of $K$, and
let $G(p/q)$ be the quotient of G resulting from $p/q$-surgery. Let $\mu$ be a
meridian of $K$ and $s$ be a $v$-framed longitude with $v>0$. Suppose that
$G$ has two generators, $x$ and $y$, such that $x=\mu$ and $s$ is a word which
excludes $x^{-1}$ and $y^{-1}$ and contains at least one $x$. Suppose further
that every homomorphism $\Phi: G(p/q) \rightarrow \Homeo+(\R)$ satisfies 
$\Phi(x)t > t$ for all $t \Rightarrow \Phi(y)t \geq t$ for all $t$.
If $p,q > 0$, then, for $p/q \geq v$, $G(p/q)$ is not left-orderable.
\end{theorem}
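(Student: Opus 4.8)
The plan is to apply Proposition~\ref{prop:NonLOCond}: I will show that every homomorphism $\Phi\colon G(p/q)\to\Homeo+(\R)$ has a global fixed point, which forces $G(p/q)$ to be non-left-orderable. Abbreviate $f=\Phi(x)$, $g=\Phi(y)$, $h=\Phi(s)$. Before anything dynamical I would extract two algebraic inputs. Because $s$ is a $v$-framed longitude we have $s=\mu^{v}\lambda$ with $\lambda$ the $0$-framed longitude, so killing the surgery slope $\mu^{p}\lambda^{q}$ is equivalent to imposing $x^{\,p-vq}s^{q}=1$ (using $\lambda=\mu^{-v}s$ and that $\mu$ commutes with $s$); with $k:=p-vq$, and $k\ge 0$ since $p/q\ge v$ and $q>0$, this reads $f^{k}h^{q}=\id$. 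Since $\mu$ and $s$ are peripheral they commute, giving $fh=hf$.

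First I would prove that $f$ has a fixed point by excluding the two fixed-point-free cases for an orientation-preserving homeomorphism. If $ft>t$ for all $t$, the hypothesis yields $gt\ge t$ for all $t$; as $s$ is a positive word in $x$ and $y$ containing at least one $x$, composing these increasing maps (at least one of them, coming from an $x$, strictly increasing) gives $ht>t$ for all $t$, whence $f^{k}h^{q}t>t$, contradicting $f^{k}h^{q}=\id$. The case $ft<t$ for all $t$ reduces to the previous one by conjugating the entire action by $t\mapsto-t$, which is again a homomorphism $G(p/q)\to\Homeo+(\R)$ to which the hypothesis applies. Hence $F:=\mathrm{Fix}(f)$ is a nonempty closed set.

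Finally I would show that every $t_{0}\in F$ is a global fixed point. From $h^{q}(t_{0})=f^{-k}(t_{0})=t_{0}$ and the absence of nontrivial periodic points for orientation-preserving homeomorphisms, $h(t_{0})=t_{0}$. To see $g(t_{0})=t_{0}$, I would delete any trailing $x$'s from $s$ (each fixes $t_{0}$, so the shortened word still fixes $t_{0}$); this leaves a word ending in $y$, which is legitimate because for a nontrivial knot the longitude is not a power of the meridian, so $s$ must involve $y$. Writing the shortened word as $u_{1}\cdots u_{n}$ and setting $p_{n}=t_{0}$, $p_{i-1}=\Phi(u_{i})(p_{i})$, the identity $h(t_{0})=t_{0}$ becomes $p_{0}=t_{0}$. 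If $g(t_{0})>t_{0}$, I would prove by downward induction that $p_{i}>t_{0}$ for every $i<n$ (the base case $p_{n-1}=g(t_{0})>t_{0}$ uses that the word now ends in $y$): an application of $g$ sends a point above $t_{0}$ to a point above $g(t_{0})>t_{0}$, while an application of $f$ keeps a point inside its own component of $\R\setminus F$, whose left endpoint lies in $F$ and is therefore at least $t_{0}$. This forces $p_{0}>t_{0}$, contradicting $p_{0}=t_{0}$; the possibility $g(t_{0})<t_{0}$ is symmetric. So $t_{0}$ is fixed by both generators, hence globally.

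I expect the third step to be the crux: controlling the orbit $p_{n},\dots,p_{0}$ of $t_{0}$ under the positive word $s$, and exploiting the $f$-invariance of the components of $\R\setminus F$ to trap the orbit strictly on one side of $t_{0}$, is the one genuinely dynamical argument. The remaining care is bookkeeping—correctly translating the slope condition $p/q\ge v$ into the relation $f^{k}h^{q}=\id$ with $k\ge 0$, and confirming that the ``at least one $x$'' and (for nontrivial knots) ``at least one $y$'' features of $s$ are exactly what the two halves of the argument consume.
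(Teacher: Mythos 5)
Your proposal is correct and takes essentially the same route as the paper: non-left-orderability via Proposition~\ref{prop:NonLOCond}, the relation $x^{p-qv}s^q=1$, the case split on whether $\Phi(x)$ has a fixed point, the reflection trick $t\mapsto -t$, and the positivity of $s$ (with its guaranteed $x$ and, for a nontrivial knot, $y$) doing the same work in each case. The only differences are cosmetic: where the paper applies $s^{-q}$ as a word in $x^{-1},y^{-1}$ to the fixed point, you first show $\Phi(s)$ fixes $t_0$ and then trace the orbit under the truncated positive word, and your component-of-$\R\setminus\mathrm{Fix}(\Phi(x))$ step can be replaced by plain order preservation, since $p_i>t_0$ already gives $\Phi(x)p_i>\Phi(x)t_0=t_0$.
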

\begin{remark}
When applying Theorem~\ref{thm:CGHVCond}, it is sufficient to demonstrate
that every homomorphism $\Phi: G \rightarrow \Homeo+(\R)$ satisfies
$\Phi(x)t > t$ for all $t \Rightarrow \Phi(y)t \geq t$ for all $t$, since this
implies the final hypothesis in the statement of the theorem.
As a result, we can understand Theorem~\ref{thm:CGHVCond}, like Theorems
\ref{thm:CWCond} and \ref{thm:ITCond}, to be a set of conditions on the knot group.
\end{remark}
\begin{proof}[Proof of Theorem~\ref{thm:CGHVCond}]
By Proposition~\ref{prop:NonLOCond}, it suffices to show that every
homomorphism $\Phi: G(p/q) \rightarrow \Homeo+(\R)$ has a global fixed point.
First, note that since $G(p/q)$ has the relation $x^{p-qv}s^q = 1$, we have
\begin{equation}
\label{eq:surgrel}
x^{qv-p} = s^q.
\end{equation}
Now, assume that $xt = t$ for some $t \in \mathbb{R}$. Assume 
$yt \neq t$; then, we can pick an order such that $yt > t$, 
or equivalently, $y^{-1}t < t$. By hypothesis, $s^{-1}$ contains only 
$x^{-1}$ and $y^{-1}$, so we have $x^{-1}t = t$, $y^{-1}t < t$,
and $x^{-1}y^{-1}t < x^{-1}t = t$. Thus, $s^{-q}t < t$. (Note that $s$ must
contain at least one $y$: otherwise, $s$ would be a power of the meridian $x$, so
$s$ and $x$ could not generate the peripheral subgroup.) But then
\[t > s^{-q}t = x^{p-qv}t = t\]
which is a contradiction. Thus, $yt = t$, and we have a global fixed point.

Now, we are left with the case $xt \neq t$ for all $t$. We prove
this is impossible. Since $x$ has no fixed points, we can pick an
order such that $xt > t$ for all $t$. By assumption, then, 
$yt \geq t$ for all $t$. Now, $s$ contains only $x$ and $y$, and 
we have $yxt > yt \geq t$. So, $st > t$ for all $t$, since
$s$ contains at least one $x$ by assumption. By Equation
\eqref{eq:surgrel}, $s^qt = x^{qv-p}t$, so we must have $qv - p > 0$,
or $v > p/q$. But this contradicts the assumption $p/q \geq v$.
\end{proof}
We note that this theorem can be restated in a purely group-theoretic sense.
Consider a group $G$ that has a $\Z \oplus \Z$-subgroup with distinguished generators
$\mu$ and $s$. If we assume the hypotheses of Theorem~\ref{thm:CGHVCond}, then
$G/\langle \langle \mu^{p-qv}s^q \rangle \rangle$ is not left-orderable.

Returning to the motivation for Question~\ref{question}, we can consider
how these various criteria for non-left-orderability of the fundamental
groups of surgery manifolds can help verify Conjecture~\ref{conj:BGW} for
these manifolds. We say that a knot $K$ in $S^3$ which admits a positive
L-space surgery is an \emph{L-space knot}. It is known \cite[Corollary 1.4]{OS11}
that if $K$ is an L-space knot, then $r$-surgery on $K$ produces an L-space exactly
when $r \geq 2g(K)-1$, where $g(K)$ denotes the genus of $K$.
We will focus on demonstrating that surgeries larger than this bound on known 
L-space knots produce manifolds with non-left-orderable fundamental groups.

The specific knots we will consider are families of L-space twisted torus knots.
We denote by $T_{p,q}^{\ell,m}$ the twisted torus knot 
obtained from the $(p,q)$-torus knot by twisting $\ell$ strands
$m$ full times. We will call this twisted torus knot the 
\emph{$(p,q,\ell,m)$-twisted torus knot}. Figure \ref{fig:twisted_ex}, for
instance, shows $T_{5,6}^{2,2}$.

\begin{figure}
\centering
\includegraphics[width=\textwidth]{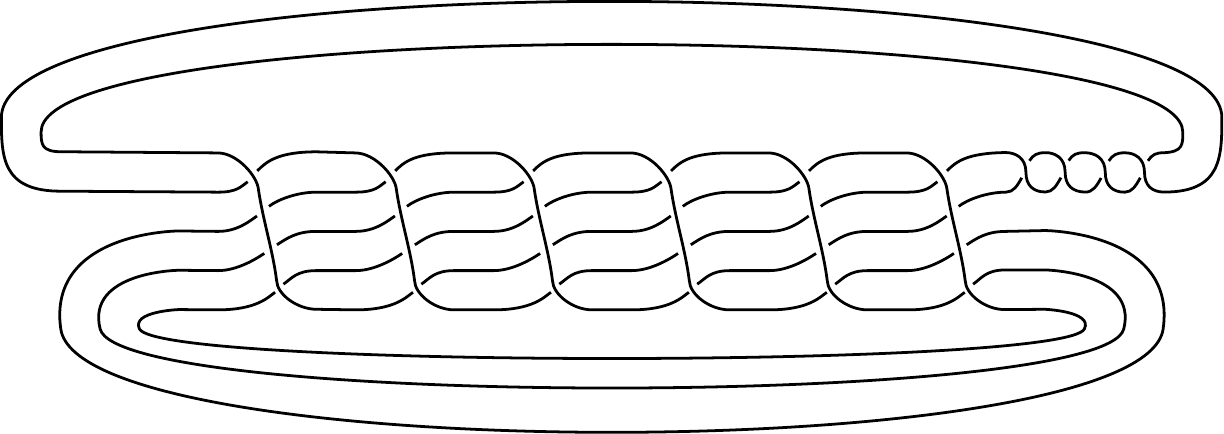}
\caption{The $(5,6,2,2)$-twisted torus knot, denoted $T_{5,6}^{2,2}$.}
\label{fig:twisted_ex}
\end{figure}

We denote the 3-manifold produced by $r$-surgery on 
$T_{p,q}^{\ell,m}$ as $M_{p,q}^{\ell,m}(r)$. Throughout, we will
assume $p, q, \ell, m, r > 0$, $r\in \Q$.

The following theorem due to Vafaee \cite{Vaf13} provides a class of twisted torus
knots which are known to be L-space knots.
\begin{theorem}[\cite{Vaf13}]
\label{thm:Vaf}
$T_{p,pk \pm 1}^{\ell,m}$ is an L-space knot if and only if
\begin{enumerate}
\item $\ell = p-1$
\item $\ell = p-2$ and $m = 1$, or
\item $\ell = 2$ and $m = 1$.
\end{enumerate}
\end{theorem}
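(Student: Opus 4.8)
The plan is to reduce the statement to a computation in knot Floer homology, using the characterization of L-space knots due to Ozsv\'ath and Szab\'o: a knot $K \subset S^3$ is an L-space knot if and only if $\rk_{\F{2}}\widehat{HFK}(K,j) \leq 1$ for every Alexander grading $j$, with the nonzero groups arranged in the familiar ``staircase,'' so that the symmetrized Alexander polynomial has the form $\Delta_K(t) = \sum_i (-1)^i t^{n_i}$ for a strictly decreasing sequence of exponents. A first observation that streamlines everything is that, for positive parameters $p,q,\ell,m$ with $\ell \leq p$, the twisted torus knot $T_{p,q}^{\ell,m}$ is the closure of the positive braid $(\sigma_1\cdots\sigma_{p-1})^{q}(\sigma_1\cdots\sigma_{\ell-1})^{\ell m}$ on $p$ strands; hence it is fibered, its Seifert genus is determined by the exponent sum via the Bennequin/Euler-characteristic count, and the top nonzero Alexander grading carries $\widehat{HFK} = \F{2}$. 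This pins down the top of the staircase for free and reduces the problem to controlling the ranks in the intermediate gradings.

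For the forward (``only if'') direction I would use the Alexander polynomial as an obstruction: since an L-space knot must have $\Delta_K$ monic with all coefficients in $\{0,\pm 1\}$ and alternating in sign along its support, it suffices to compute $\Delta_{T_{p,pk\pm1}^{\ell,m}}(t)$ and to show that whenever $(\ell,m)$ lies outside the three listed families, some coefficient has absolute value at least $2$ (or the signs fail to alternate). The natural computational tool is the reduced Burau representation of the defining braid, or equivalently a skein-relation induction on the $\ell$-strand twist region expressing $\Delta$ of the twisted torus knot in terms of $\Delta$ of the torus knots and torus links $T_{p,pk\pm1}$ carrying fewer twists. The torus-knot pieces individually have thin, alternating Alexander polynomials, so the obstruction arises precisely from the interference introduced by the extra full twists, and the content of this direction is to locate the threshold at which those terms first force a coefficient of modulus $\geq 2$.

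For the reverse (``if'') direction I would treat each of the three families in turn, either by exhibiting an explicit positive L-space (indeed lens-space or Seifert-fibered) surgery, or by computing $\widehat{HFK}$ outright and verifying the staircase. The cases $\ell = p-2,\, m=1$ and $\ell = 2,\, m=1$ should be recognizable as cables of torus knots, or as members of the Berge--Dean families admitting small Seifert-fibered fillings, so that known cabling results for L-space knots apply; the case $\ell = p-1$ is the richest and is most cleanly handled by a direct knot Floer computation showing that the only effect of the $(p-1)$-strand twisting is to lengthen the staircase without thickening it. The main obstacle throughout is the sharp bidirectional bookkeeping: on the ``only if'' side, carrying the Alexander-polynomial (or $\widehat{HFK}$) computation far enough to detect the exact parameter threshold, and on the ``if'' side, producing the L-space surgeries or staircase verifications uniformly across the three families. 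The skein/twist-region induction is the common engine, and controlling how the Maslov gradings shift under each added full twist is where the real difficulty lies.
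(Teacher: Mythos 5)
First, a point of comparison: the paper does not prove this statement at all --- it is quoted from Vafaee \cite{Vaf13}, and the paper's contribution lies elsewhere --- so your proposal has to be measured against Vafaee's actual argument. That argument hinges on a structural fact your outline never touches: precisely because $q = pk \pm 1$, the twisted torus knots $T_{p,pk\pm 1}^{\ell,m}$ are $(1,1)$-knots, i.e.\ they admit genus-one, one-bridge presentations. For a $(1,1)$-knot, knot Floer homology is computed combinatorially from a doubly pointed genus-one Heegaard diagram (no holomorphic-disk counts, no skein induction), and Vafaee's proof consists of exhibiting these diagrams for the whole family and determining exactly when the generators can be arranged so that the rank in each Alexander grading is at most one (a coherence-type analysis of the diagram). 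The fact that your plan never uses the hypothesis $q \equiv \pm 1 \pmod{p}$ is a structural red flag: the machinery you propose (positive braids, Burau/skein computation of $\Delta$, cabling) would apply verbatim to arbitrary $T_{p,q}^{\ell,m}$, where this classification is neither true as stated nor was it known.

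Second, the concrete gaps. For the ``only if'' direction you propose to rule out all parameters outside the three families using only the Alexander polynomial. But the conditions on $\Delta_K$ (monic, coefficients in $\{0,\pm 1\}$, alternating in sign) are necessary for L-space knots, not sufficient, and --- more to the point for your strategy --- there is no reason the Euler characteristic must detect the failure: a rank-two Alexander grading produced by the extra twisting can cancel in $\Delta_K$, leaving a polynomial of L-space form. You would need to prove that no such cancellation occurs across this family, which is exactly a rank computation, i.e.\ the thing the Alexander polynomial was supposed to let you avoid; Vafaee's proof establishes the excess rank directly from the $(1,1)$ diagram. For the ``if'' direction, the claim that the $\ell = 2,\ m=1$ and $\ell = p-2,\ m=1$ knots ``should be recognizable as cables of torus knots'' is unsupported (these knots are in general not cables; the structure that is actually present and exploited is the $(1,1)$/Berge-type structure), and for $\ell = p-1$ you invoke ``a direct knot Floer computation'' without specifying any finite model in which to carry it out. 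Absent the $(1,1)$ structure, no such model is available, so the hard half of the theorem is left without a method in both directions.
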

In \cite{CW13}, Clay and Watson apply Theorem~\ref{thm:CWCond} to
certain subfamilies of the L-space knots specified in 
Theorem~\ref{thm:Vaf}. Their progress is summarized by the following
theorem.
\begin{samepage}
\begin{theorem}[{\cite[Theorems 4.5 and 4.7]{CW13}}] \label{thm:CW}
\mbox{}
\begin{enumerate}[label=(\alph*)]
\item For sufficiently large $r$, $M_{3,5}^{2,m}(r)$ has a non-left-orderable fundamental group.
\item Suppose that $q$ is a positive integer congruent to $2$ modulo $3$.
For sufficiently large $r$, $M_{3,q}^{2,1}(r)$ has a non-left-orderable 
fundamental group.
\end{enumerate}
\end{theorem}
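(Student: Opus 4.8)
The plan is to place each knot group into the two-generator form required by Theorem~\ref{thm:CGHVCond} and then to verify its single dynamical hypothesis; by the Remark following that theorem it suffices to check the hypothesis on $G$ itself rather than on $G(p/q)$. First I would read off from the positive braid presenting $T_{3,5}^{2,m}$ (and, separately, $T_{3,q}^{2,1}$) a Wirtinger presentation and reduce it by Tietze moves to a presentation $G = \langle x, y \mid R \rangle$ in which $x$ is a meridian $\mu$. Since both families sit on three strands with only two strands twisted, the torus-knot part contributes a single relator and the $m$-fold twist modifies it in a controlled way, so the reduction to two generators is mechanical; I would record the resulting relator $R$ explicitly, as its precise form governs everything downstream.

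Next I would produce a framed longitude. The requirement is to exhibit some longitude $s$ as a \emph{positive} word in $x$ and $y$ (excluding $x^{-1}$ and $y^{-1}$) that contains at least one $x$, and to compute its framing $v > 0$. Because these knots are fibered with positive Seifert framing and arise from positive braids, the natural longitude already appears as a positive word in the two generators, so this step reduces to a writhe/linking-number bookkeeping to pin down the value of $v$.

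The crux is the third step: verifying that every $\Phi : G \to \Homeo+(\R)$ satisfies ``$\Phi(x)t > t$ for all $t \Rightarrow \Phi(y)t \geq t$ for all $t$.'' I would assume $\Phi(x)$ is fixed-point-free and positive and argue by contradiction, supposing $\Phi(y)t_0 < t_0$ for some $t_0$. The strategy is to use $R$ to solve for a power $y^k$ as a word built from $x$ and conjugates of powers of $x$, i.e. words $w x^{c} w^{-1}$ with $c \geq 0$; when $\Phi(x)t > t$ everywhere, each such conjugate is again a fixed-point-free positive homeomorphism, so a sufficiently positive word of this type satisfies $\Phi(y^k)t \geq t$ for all $t$. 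Since an order-preserving $\Phi(y)$ with $\Phi(y)t_0 < t_0$ forces $\Phi(y^k)t_0 < t_0$, this contradicts the supposition and yields $\Phi(y)t \geq t$. Converting the combinatorics of $R$ into the monotonicity statement ``$y^k$ is a positive conjugate-word'' is the genuine difficulty, and I expect the congruence $q \equiv 2 \pmod 3$ in part (b) to enter precisely here, ensuring $R$ has the sign pattern that lets one isolate such a power of $y$; in part (a) the same must be tracked uniformly in $m$.

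With the hypothesis verified and $v$ computed, Theorem~\ref{thm:CGHVCond} concludes that $G(p/q)$ is not left-orderable whenever $p, q > 0$ and $p/q \geq v$. Setting $r = p/q$ ``sufficiently large,'' i.e. $r \geq v$, gives the stated conclusion for both families. The main obstacle throughout is the third step: the first two are bookkeeping, but the dynamical implication is where the specific relator, the twisting parameter $m$, and the congruence condition all must cooperate.
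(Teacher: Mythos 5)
Your overall scaffolding matches the paper's: within this paper, Theorem~\ref{thm:CW} is recovered as the $p=3$, $\ell=p-1=2$ case of Theorem~\ref{thm:CGHV1}, proved by feeding a two-generator presentation into Theorem~\ref{thm:CGHVCond}. But both substantive steps of your plan have genuine gaps. First, your step~2 claim --- that because these knots are positive braid closures ``the natural longitude already appears as a positive word in the two generators'' --- is unjustified and false as stated: positivity of $s$ is a property of a particular generating pair, not of the knot. In the paper, the surface framing computed in Proposition~\ref{prop:pk-1} is $s = a^{p-\ell-1}(a(b^{1-k(p-\ell)}a^{p-\ell})^m)^{\ell}a$, which contains \emph{negative} powers of $b$ (namely $b^{1-k}$ when $\ell=p-1$ and $k>1$, e.g.\ $b^{-1}$ for $T_{3,5}^{2,m}$); it becomes a positive word only after rewriting in the specific pair $x=a^{-1}b^k$, $y=b^{1-k}a$ of Lemma~\ref{lem:pk-1_step1}. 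Moreover, that same pair must simultaneously make the dynamical implication provable, so your steps 2 and 3 are coupled through the choice of $y$; treating them as independent bookkeeping (and leaving the presentation from your step~1 unrecorded) hides exactly where the content lies.

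Second, your step~3 --- the crux, as you acknowledge --- is a plan rather than an argument, and it is a different plan from the paper's. The route you sketch (solve the relator so that a positive power of $y$ equals a product of conjugates $g x^c g^{-1}$ with $c\geq 0$) is Ichihara--Temma's argument; it can in fact be carried out for these $p=3$ knots (that is essentially Theorem~\ref{thm:ITCor}), but you give no derivation of the required relator form, and whether a given presentation admits one is precisely the difficulty you defer. The paper's Lemma~\ref{lem:pk-1_step1} does something slicker that avoids this entirely: with the generators above, \emph{both} sides of the relator are positive words in $x,y$, and inserting finitely many $x$'s into the left side turns it into the right side with a single extra $y$; hence if $xt>t$ for all $t$, one gets $wyu\,t > wu\,t$ for suitable words $w,u$, and since $w$ and $u$ act by order-preserving homeomorphisms ($u$ in particular being surjective), this forces $yt>t$ for all $t$. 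This insertion trick requires no special relator form and works uniformly in $p,k,\ell,m$, which is why the paper obtains all of Theorem~\ref{thm:CGHV1} rather than only the $p=3$ cases. A smaller correction: the congruence $q\equiv 2\pmod 3$ does not enter as a sign pattern inside a fixed relator; it simply says $q=3k-1$, selecting the presentation of Proposition~\ref{prop:pk-1}, and the complementary case $q=3k+1$ goes through the same way via Proposition~\ref{prop:pk+1}.
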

\end{samepage}
We note that the knots considered by Clay and Watson are all of the
form $T_{p,p(k+1) - 1}^{p-1,m}$. Thus, both theorems
verify Conjecture~\ref{conj:BGW} for surgeries on subfamilies of the
first case of twisted torus knots specified by Theorem~\ref{thm:Vaf}.

In \cite{IT14}, Ichihara and Temma similarly apply Theorem \ref{thm:ITCond} to
prove the following theorem, generalizing the work of Clay and Watson.
\begin{theorem}[{\cite[Corollary 1.2]{IT14}}]
\label{thm:ITCor}
Suppose that $q$ is a positive integer congruent to $2$ modulo $3$. For
sufficiently large $r$, then $M_{3,3k-1}^{2,m}(r)$ has non-left-orderable
fundamental group. \end{theorem}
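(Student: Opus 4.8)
The plan is to apply Theorem~\ref{thm:ITCond} to the knot $K = T_{3,3k-1}^{2,m}$; the only real content is producing a two-generator, one-relator presentation of $\pi_1(S^3 - K)$ in exactly the form that theorem demands. I would first realize $K$ as the closure of the positive braid $(\sigma_1\sigma_2)^{3k-1}\sigma_1^{2m}$ in $B_3$, where $\sigma_1^{2m}$ records the $m$ full twists on two strands. The hypothesis $q = 3k-1$ (equivalently $q \equiv 2 \pmod 3$) guarantees $\gcd(3,q)=1$, so this closure is a genuine knot lying in Vafaee's L-space family by Theorem~\ref{thm:Vaf} (case $\ell = p-1 = 2$). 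Running the Wirtinger presentation on the braid and performing Tietze moves to eliminate all but two meridional generators $x,y$ then yields a presentation $\langle x,y \mid R\rangle$ with $x = \mu$ a meridian.

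The technical heart is to normalize the single relation $R$ into the shape $(w_1x^{c}w_1^{-1})y^{-r}(w_2^{-1}x^{d}w_2)y^{r-\ell}$ with $c,d\geq 0$ and $\ell\geq 0$, and simultaneously to exhibit the longitude as $\lambda = x^{-m'}wx^{-n'}$, where $w$ is a positive word in $x,y$ (excluding $x^{-1}$ and $y^{-1}$) and $m',n'\geq 0$ are the longitude exponents (not to be confused with the twist count $m$). This is a bookkeeping exercise on the cyclic word coming from the braid: the $(\sigma_1\sigma_2)^{3k-1}$ block should supply the $y$-powers together with the conjugating words $w_1,w_2$, while the $\sigma_1^{2m}$ twisting supplies the positive $x$-powers $x^c, x^d$. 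Reading the $0$-framed longitude off the same braid and tracking the writhe, I would compute the framing $v = m'+n'$; since a positive braid closure is fibered, this should coincide with the genus bound $2g(K)-1 = 2(3k-1)+2m-3$.

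With the presentation verified, Theorem~\ref{thm:ITCond} applies directly and concludes that $p/q$-surgery gives a non-left-orderable fundamental group whenever $p/q \geq m'+n'$, which is exactly what ``sufficiently large $r$'' means; no separate order-theoretic step is needed, since the dynamical reasoning is already internal to that theorem. Alternatively one could route through the paper's own Theorem~\ref{thm:CGHVCond}, taking $s = w$ and $v = m'+n'$, in which case the genuine dynamical content reappears as the hypothesis to be checked: if $\Phi(x)t>t$ for all $t$, then every conjugate $\Phi(w_i x^{e} w_i^{-1})$ with $e>0$ is also fixed-point-free and positive (conjugation in $\Homeo+(\R)$ preserves rightward translation of every point), and the relation $(w_1x^{c}w_1^{-1})\Phi(y)^{-r}(w_2^{-1}x^{d}w_2)\Phi(y)^{r-\ell}=\mathrm{id}$ then pins the sign of $\Phi(y)$ by an orbit-tracking argument in the style of Nakae and Ichihara--Temma, yielding $\Phi(y)t\geq t$ for all $t$.

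I expect the main obstacle to be the algebraic normalization of the second step: forcing the raw braid relation into the rigid conjugated form with every exponent of the correct sign, and matching the longitude and framing data so that the surgery threshold $m'+n'$ agrees with $2g(K)-1$. Once that bookkeeping is complete, the congruence hypothesis and the surgery range fall out, and the theorem follows by direct appeal to Theorem~\ref{thm:ITCond}.
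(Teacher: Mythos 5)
Your strategy---verify the hypotheses of Theorem~\ref{thm:ITCond} for $K = T_{3,3k-1}^{2,m}$---is precisely how the cited source \cite{IT14} proves this statement; note that the paper does not reprove it, but instead derives the stronger Theorem~\ref{thm:CGHV1} from Theorem~\ref{thm:CGHVCond}, Proposition~\ref{prop:pk-1}, and Lemma~\ref{lem:pk-1_step1}. The problem is that your proposal stops exactly where the mathematical content begins. Producing a two-generator presentation whose single relation has the form $(w_1x^{c}w_1^{-1})y^{-r}(w_2^{-1}x^{d}w_2)y^{r-\ell}$ and whose longitude has the form $x^{-m'}wx^{-n'}$ with $w$ a positive word \emph{is} the entire theorem once Theorem~\ref{thm:ITCond} is granted, and you defer all of it as ``bookkeeping'' without exhibiting the words $w_1$, $w_2$, $w$ or any of the exponents. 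Moreover, the route you propose for this step would not go through as described. A Wirtinger presentation has all generators meridional, and a Tietze reduction ``to two meridional generators'' is not available here: in every presentation known to satisfy these hypotheses (those of \cite{CW13}, \cite{IT14}, and Proposition~\ref{prop:pk-1} with $p=3$, $\ell=2$), the second generator is \emph{not} a meridian---for instance $y=b^{1-k}a$ of Lemma~\ref{lem:pk-1_step1} has image $2$, not $1$, in $H_1(S^3-K)\cong\Z$ (and abelianizing the Ichihara--Temma relation forces the class of $y$ to be $(c+d)/\ell$, generally not $1$). What actually produces the needed form is the genus-two Heegaard splitting and Seifert--van Kampen, where the surface-framed longitude $s$ is read off the Heegaard surface as a visibly positive word; nothing in a Wirtinger/Tietze reduction forces, or even suggests, that shape.

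A second concrete error: your proposed consistency check is false. The framing threshold that comes out of these presentations is the \emph{surface framing} $v = pq+\ell^2 m = 3(3k-1)+4m$, which strictly exceeds $2g(K)-1 = 2(3k-1)+2m-3$ for all $k,m\geq 1$; the two do not coincide. This does not threaten the statement itself, since ``sufficiently large $r$'' tolerates any finite threshold, but it means the target you set for your longitude computation is unattainable, and a computation aimed at matching it would appear to fail even when done correctly. To complete a proof along your lines, you must actually carry out the computation: either reproduce the presentation of \cite{IT14}, or specialize the paper's machinery by taking $x=a^{-1}b^k$ and $y=b^{1-k}a$ as in Lemma~\ref{lem:pk-1_step1}, writing $s=((yx)^{k-1}y^{m+1})^{2}(yx)^{k-1}y$ (the $p=3$, $\ell=2$ case treated in Section~\ref{sec:LO}), and applying Theorem~\ref{thm:CGHVCond} with $v=3(3k-1)+4m$.
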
 In this paper, we apply
Theorem~\ref{thm:CGHVCond} to prove the following result.
\begin{theorem}
\label{thm:CGHV1}
For sufficiently large $r$, $M_{p,pk \pm 1}^{p-1,m}(r)$ and
$M_{p,pk \pm 1}^{p-2,1}(r)$ have non-left-orderable fundamental group.
\end{theorem}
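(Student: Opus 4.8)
The plan is to deduce Theorem~\ref{thm:CGHV1} from the general criterion of Theorem~\ref{thm:CGHVCond}. For each knot $K = T_{p,pk\pm 1}^{\ell,m}$ in the two relevant families ($\ell = p-1$ with arbitrary $m$, and $\ell = p-2$ with $m=1$), I would carry out three steps: (i) produce a two-generator presentation of the knot group $G = \pi_1(S^3 - K)$ in which one generator $x$ is the meridian $\mu$; (ii) exhibit a $v$-framed longitude $s$ that is a word in $x$ and $y$ excluding $x^{-1}$ and $y^{-1}$ and containing at least one $x$, with $v > 0$; and (iii) verify, using the remark following Theorem~\ref{thm:CGHVCond}, the dynamical implication that for every $\Phi : G \to \Homeo+(\R)$, if $\Phi(x)t > t$ for all $t$ then $\Phi(y)t \geq t$ for all $t$. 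Granting (i)--(iii), Theorem~\ref{thm:CGHVCond} immediately gives that $G(p/q)$ is not left-orderable for every $p/q \geq v$, which is precisely the ``sufficiently large $r$'' conclusion.

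For step (i), I would start from the positive braid $(\sigma_1\cdots\sigma_{p-1})^{q}(\sigma_1\cdots\sigma_{\ell-1})^{\ell m}$ on $p$ strands, with $q = pk\pm 1$, whose closure is $K$. Applying the Wirtinger presentation and eliminating all but two of the meridional generators yields a two-generator, one-relator presentation; I would take $x$ to be the meridian of an outermost strand so that $x = \mu$, and $y$ to be a second suitably chosen meridian. The point of restricting to $\ell = p-1$ and to $\ell = p-2,\, m=1$ is exactly that these are the cases in which this elimination stays tractable and collapses the relation into the positive shape exploited by Ichihara and Temma, namely (up to conjugacy)
\[
(w_1 x^{c} w_1^{-1})\, y^{-r}\, (w_2^{-1} x^{d} w_2)\, y^{\,r-\ell} = 1,
\]
with $c,d \geq 0$ and $w_1, w_2$ positive words in $x$ and $y$. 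For step (ii), since the braid is positive the longitude read off directly from the braid is a positive word $s$ in $x$ and $y$ containing at least one $x$; its framing $v$ is the writhe $v = q(p-1) + \ell m(\ell-1)$, and the Seifert (0-framed) longitude is $x^{-v}s$ up to conjugacy. As a consistency check, for a positive braid knot $2g(K)-1 = v - p < v$, so the threshold $p/q \geq v$ indeed lies above the L-space surgery bound $2g(K)-1$ discussed earlier.

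The main obstacle is step (iii), where the structure of the relation does the work. Assume $\Phi(x)t > t$ for all $t$, so $\Phi(x)$ is fixed-point-free and moves every point strictly to the right. Because conjugation in $\Homeo+(\R)$ preserves the property of being fixed-point-free and right-moving, every conjugate of a positive power of $x$ shares it; hence $A := w_1 x^{c} w_1^{-1}$ and $B := w_2^{-1} x^{d} w_2$ satisfy $At > t$ and $Bt > t$ for all $t$ whenever $c,d>0$. Supposing toward a contradiction that $\Phi(y)t_0 < t_0$ for some $t_0$, I would track the image of a well-chosen point through the factors of the relation: the factors $A$ and $B$ can only push a coordinate strictly right, while a point at which $y$ moves left lets the $y^{-r}$ and $y^{\,r-\ell}$ factors be controlled, so evaluating the whole product (which equals the identity) contradicts the equality by forcing a strict net rightward displacement. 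The delicate part, and the reason the two families must be handled somewhat separately, is making this orbit analysis go through uniformly in $p$, $k$, and $m$, since the exponents $r$, $\ell$ and the conjugating words depend on these parameters; the key enabling fact is that positivity of the braid guarantees $w_1$ and $w_2$ involve only $x$ and $y$ with no inverses, so once one knows $\Phi(y)t \geq t$ they too move points weakly right, which is what closes the bootstrap. Once the implication is established, no further input is required and Theorem~\ref{thm:CGHVCond} completes the proof.
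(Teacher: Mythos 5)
Your overall route---funnel everything through Theorem~\ref{thm:CGHVCond}---is the same as the paper's, but the three steps that constitute the actual content are either unproved or flawed. The central gap is step (i): you assert that Wirtinger elimination on the positive braid collapses the relator to the Ichihara--Temma conjugate-power shape $(w_1x^{c}w_1^{-1})y^{-r}(w_2^{-1}x^{d}w_2)y^{r-\ell}=1$ uniformly in $p,k,m$, but this is exactly the hard computation, and there is no evidence it holds in this generality. The two-generator presentations one actually obtains (the paper derives them via Seifert--van Kampen on a genus-two Heegaard splitting, Propositions~\ref{prop:pk-1} and \ref{prop:pk+1}) have relators built from $\ell-1$ alternating blocks, e.g.
\[a^{p-\ell}(a(b^{1-k(p-\ell)}a^{p-\ell})^m)^{\ell-1}a = b^{k(p-\ell)-1}(b^k(b^{1-k(p-\ell)}a^{p-\ell})^m)^{\ell-1}b^k,\]
which do not visibly reduce to a single pair of conjugated $x$-powers separated by powers of $y$. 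This is precisely why the paper does \emph{not} reuse Theorem~\ref{thm:ITCond}, but instead isolates the dynamical statement as a hypothesis (Theorem~\ref{thm:CGHVCond}) and verifies it by a different device: writing the relation in the generators $x=\mu$ and $y$, one side is obtained from the other by inserting copies of $x$ up to a single extra letter $y$; since each insertion of $x$ strictly increases the image of every point when $xt>t$, surjectivity and order-preservation of the word homeomorphisms force $yt'>t'$ for all $t'$ (Lemmas~\ref{lem:pk-1_step1} and \ref{lem:pk+1_step1}). No conjugate-power normal form is ever needed.

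Two further gaps. In step (ii), positivity of the braid makes the longitude a positive word in the Wirtinger meridians, but not automatically in your two surviving generators $x,y$; this must be checked case by case, and it is not a formality---for $\ell=p-2$, $m=1$ the paper has to invoke the group relation itself to rewrite $s$ before it becomes a positive word in $x$ and $y$. In step (iii), the contradiction you sketch via ``a strict net rightward displacement'' is not a valid principle in $\Homeo+(\R)$: a product of strictly right-moving and left-moving homeomorphisms can equal the identity, so no displacement count can close the argument; moreover your final sentence (``once one knows $\Phi(y)t\geq t$ \dots which is what closes the bootstrap'') assumes the desired conclusion inside the proof by contradiction, so the sketch is circular. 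Had the relation genuinely had the IT form with $c,d>0$ and $\ell\geq 1$, there is a correct short argument requiring no hypotheses on $w_1,w_2$ at all: $A=w_1x^cw_1^{-1}$ and $B=w_2^{-1}x^dw_2$ move every point strictly right, so $A^{-1}t<t$ and $y^{-r}By^{r-\ell}t>y^{-\ell}t$ for all $t$, whence $y^{-\ell}t<t$ for all $t$, which forces $y^{\ell}$, and then $y$ (being fixed-point free), to move every point right. That is not the argument you gave, and in any case it cannot start without step (i), which remains unsupported.
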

Theorem~\ref{thm:CGHV1} answers Question \ref{question} for
cases 1 and 2 of the twisted torus knots specified in Theorem~\ref{thm:Vaf}.
Case 1 is a generalization of Theorems~\ref{thm:CW} and \ref{thm:ITCor}; case
2 is an entirely new family of twisted torus knots. In light of
Theorem~\ref{thm:Vaf}, these results support Conjecture~\ref{conj:BGW}.

The paper is organized as follows. In Section \ref{sec:Groups}, we compute
the knot groups of $T_{p,pk \pm 1}^{\ell,m}$ and their corresponding
peripheral subgroups. In Section \ref{sec:LO}, we prove
Theorem~\ref{thm:CGHV1}.

\begin{acknowledgements}
We thank Jennifer Hom and Mike Wong for providing the resources and
background material necessary for our work and for their guidance
throughout the research process. We also thank Liam Watson for his 
support and for useful discussions. We thank the Columbia University
math REU program for giving us the opportunity to pursue this research.
This REU program was partially funded by NSF grant DMS-0739392.
\end{acknowledgements}

\section{Computing Knot Groups and Peripheral Subgroups}
\label{sec:Groups}
First, we fix some notation. For the knot groups of twisted torus
knots, we will generalize the notation of \cite{CW13} by defining 
\[G_{p,q}^{\ell,m} = \pi_1(S^3 - T_{p,q}^{\ell,m}).\]
We will denote by $G_{p,q}^{\ell,m}(r)$ the fundamental group of
$M_{p,q}^{\ell,m}(r)$.

It is a well-known fact (see, for instance, \cite{Lic97}) 
that, for a nontrivial knot, the fundamental group of the 
boundary of the knot complement injects into the knot group.
Its image (up to conjugation) is the peripheral subgroup, 
which means that the peripheral subgroup is abelian.

We now derive the knot groups of two general cases, 
$T_{p,pk+1}^{\ell,m}$ and $T_{p,pk - 1}^{\ell,m}$. From these,
we can then get the knot groups of the L-space twisted torus
knots specified in Theorem \ref{thm:Vaf} by plugging in specific
values of $\ell$ and $m$. Our approach to both cases is the same
as that of Clay and Watson \cite{CW13}: we will use the
Seifert-van Kampen Theorem applied to a genus-two Heegaard splitting,
with the knot appearing on the Heegaard surface
(see Figures \ref{fig:pk-1_surfacegen} and \ref{fig:pk+1_surfacegen}, which depict
$T_{5,5k-1}^{2,m}$ and $T_{5,5k+1}^{2,m}$ as examples). 

\begin{figure}
\labellist
\pinlabel ${\LARGE \underbrace{ }}$ at 125 187
\pinlabel {\small $m$ twists} at 125 173
\pinlabel $\LARGE \overbrace{ }$ at 333 160
\pinlabel {\small $k$ twists} at 333 175
\endlabellist
\centering
\includegraphics[width=\textwidth]{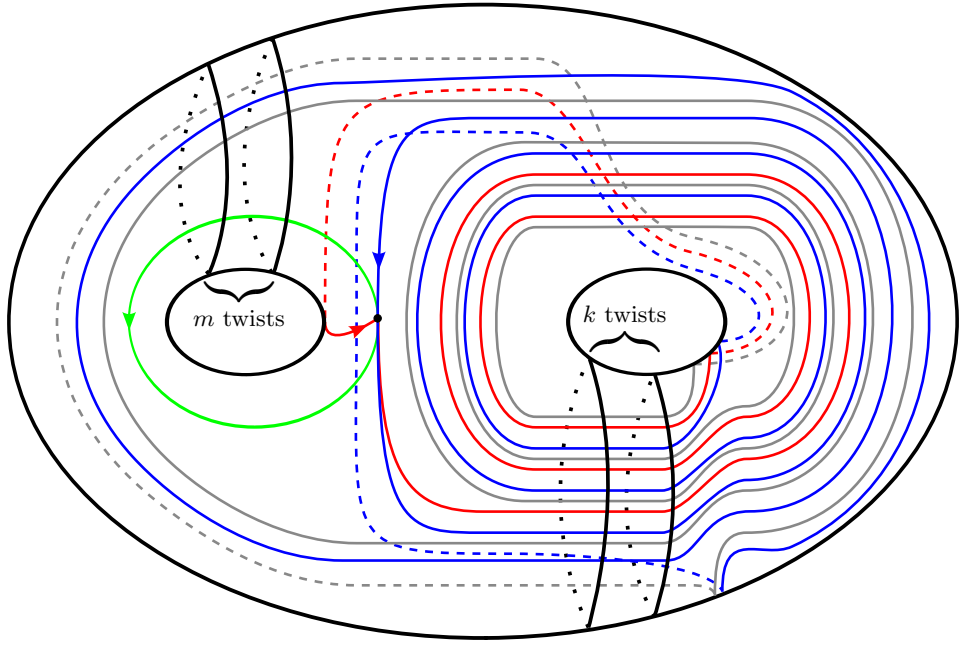}
\caption{Generators for the fundamental group of
$\Sigma \setminus \nu (T_{p,pk-1}^{\ell,m})$. Here, we show
$T_{5,5k-1}^{2,m}$ as an example. The knot is shown in gray.}
\label{fig:pk-1_surfacegen}
\end{figure}
\begin{figure}
\labellist
\pinlabel \textcolor{a-color}{a} at 383 80
\pinlabel \textcolor{b-color}{b} at 290 185
\pinlabel \textcolor{c-color}{c} at 128 260
\pinlabel \textcolor{d-color}{d} at 220 185
\endlabellist
\centering
\includegraphics[width=\textwidth]{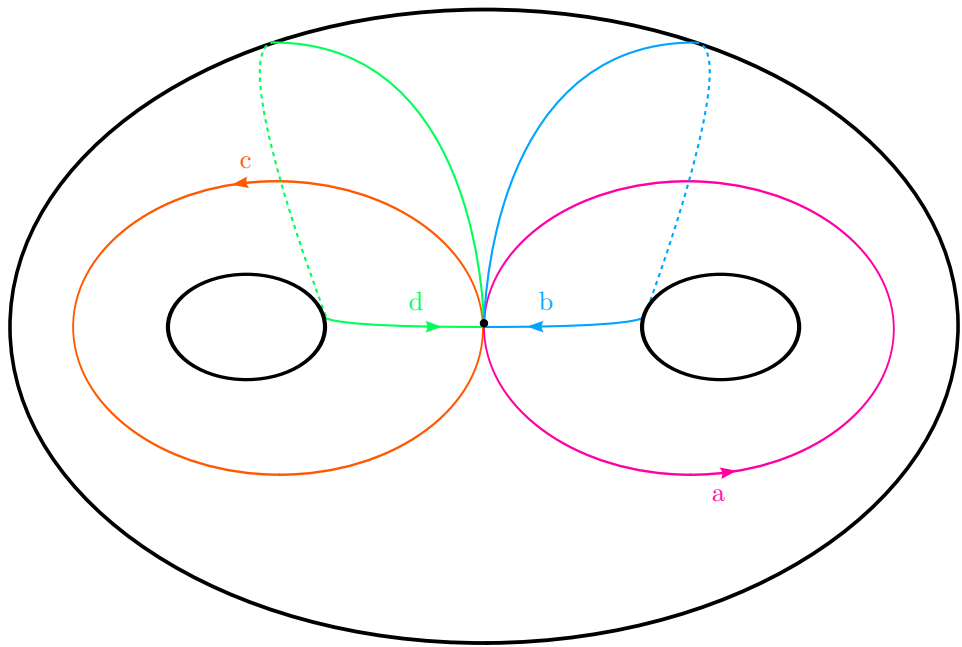}
\caption{Generators for the fundamental group of the 
Heegaard surface $\Sigma$.}
\label{fig:pk-1_generators}
\end{figure}
\begin{figure}
\labellist
\pinlabel ${\LARGE \underbrace{ }}$ at 125 187
\pinlabel {\small $m$ twists} at 125 173
\pinlabel $\LARGE \overbrace{ }$ at 333 160
\pinlabel {\small $k$ twists} at 333 175
\pinlabel \textcolor{mu-color}{$\mu$} at 210 180
\pinlabel \textcolor{s-color}{s} at 185 185
\endlabellist
\centering
\includegraphics[width=\textwidth]{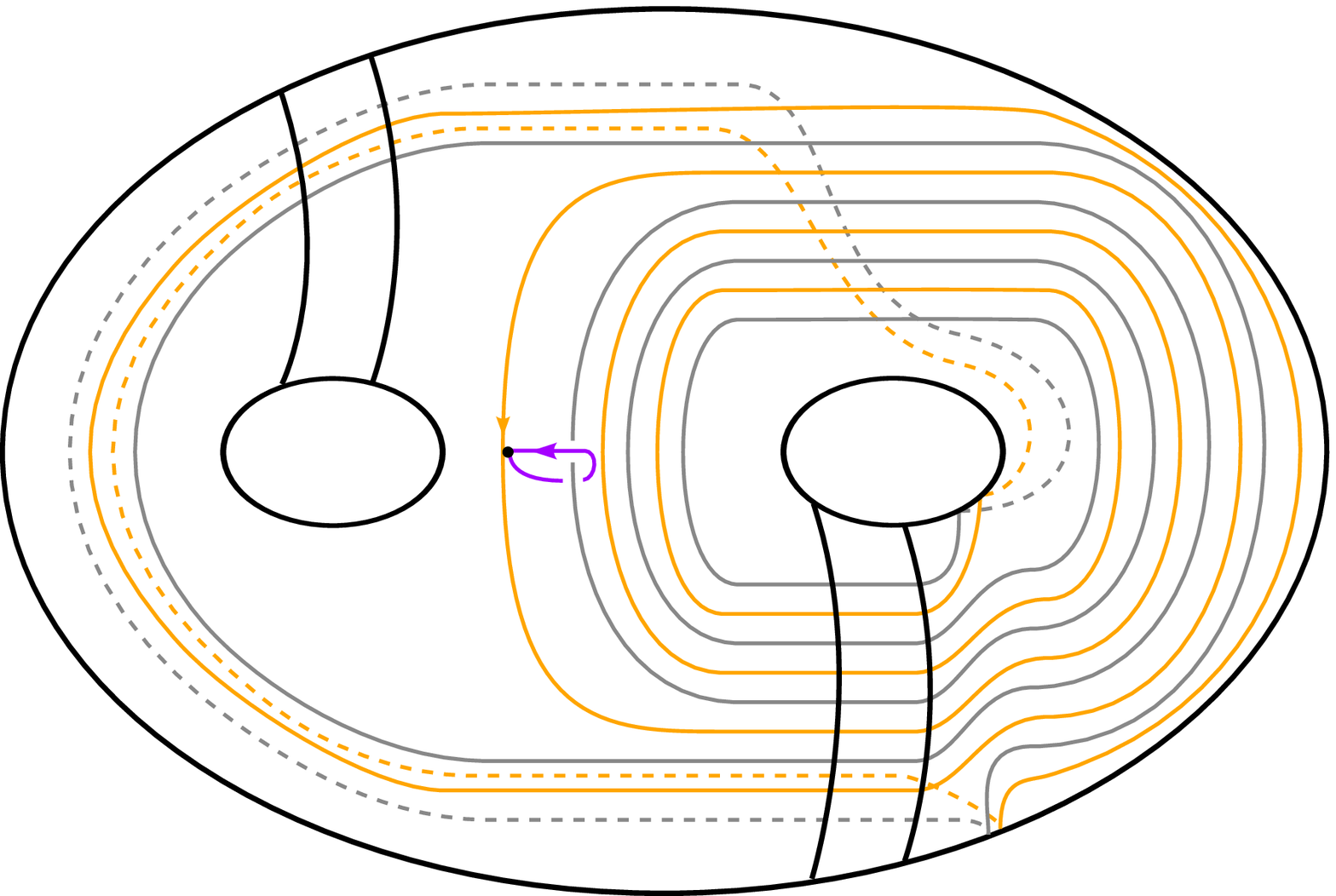}
\caption{Generators for the peripheral subgroup of $T_{p,pk-1}^{\ell,m}$.
Here, we show $T_{5,5k-1}^{2,m}$ as an example. The knot is shown in gray.}
\label{fig:pk-1_peripheral}
\end{figure}

\begin{proposition} \label{prop:pk-1}
For the $(p,pk-1,\ell,m)$-twisted torus knot,
\begin{enumerate}[label=(\alph*)]
\item The knot group is
\[G_{p,pk-1}^{\ell,m} = \langle a,b \mid a^{p-\ell}(a(b^{1-k(p-\ell)}a^{p-\ell})^m)^{\ell-1}a = b^{k(p-\ell)-1}(b^k(b^{1-k(p-\ell)}a^{p-\ell})^m)^{\ell-1}b^k \rangle.\]
\item The peripheral subgroup is generated by the meridian 
\[\mu = a^{-1}b^k\]
and the surface framing 
\[s = \mu^{p(pk-1)+\ell^2m}\lambda = a^{p-\ell-1}(a(b^{1-k(p-\ell)}a^{p-\ell})^m)^\ell a.\]
\end{enumerate}
\end{proposition}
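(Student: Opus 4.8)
The plan is to realize the twisted torus knot $T_{p,pk-1}^{\ell,m}$ on a genus-two Heegaard surface $\Sigma$ of $S^3$, exactly as in the cited approach of Clay and Watson. I would start from the standard handle-decomposition picture (Figure~\ref{fig:pk-1_generators}), where $\pi_1(\Sigma)$ is free on the four loops $a,b,c,d$ shown, and where the two handlebodies on either side of $\Sigma$ contribute relations. The knot complement $S^3 - T_{p,pk-1}^{\ell,m}$ deformation retracts onto $\Sigma \setminus \nu(K)$ glued to the two handlebodies, so I would apply Seifert–van Kampen to this decomposition. The handlebody relations will let me eliminate $c$ and $d$, expressing them as words in $a$ and $b$; after this elimination the two-generator presentation on $a,b$ emerges, and the single defining relation records how the knot curve, read off the braid word for the $(p,pk-1)$-torus knot with $m$ full twists on $\ell$ strands, bounds a disk in one of the handlebodies.

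**Tracking the braid word and the twisting.** The combinatorial heart of the computation is translating the geometric picture of the braid into the word that appears in the relation. The $(p,pk-1)$-torus knot contributes the bulk of the crossings, and I would bookkeep these as the $a^{p-\ell}$ and $b^{k(p-\ell)}$ blocks, while the $m$ full twists on $\ell$ strands produce the repeated factor $(b^{1-k(p-\ell)}a^{p-\ell})^m$ raised to the power $\ell-1$ on each side. I expect the computation to proceed by reading the strands across the twist region one at a time, each strand crossing contributing one conjugated copy of the twist factor, which is precisely the origin of the exponent $\ell-1$. Carefully aligning the starting and ending points of the braid closure — so that the left side $a^{p-\ell}(a(\cdots)^m)^{\ell-1}a$ and the right side $b^{k(p-\ell)-1}(b^k(\cdots)^m)^{\ell-1}b^k$ match up as the same curve traversed in two ways — is the part that demands care.

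**Identifying the peripheral subgroup.** For part (b), I would read the meridian $\mu$ directly off the picture (Figure~\ref{fig:pk-1_peripheral}) as a small loop encircling one strand of the knot; geometrically this is the short arc $a^{-1}b^k$ connecting the two obvious generators across the knot. For the surface framing $s$, the key point is that the knot lies \emph{on} $\Sigma$, so the curve on $\Sigma$ running parallel to $K$ gives a canonical longitude-type curve; I would express it as a word in $a,b$ by tracing it along the surface. To pin down the framing correction, I would compute the linking number of this surface-framing curve with $K$ and compare it against the $0$-framed longitude $\lambda$, which forces the meridian prefactor $\mu^{p(pk-1)+\ell^2 m}$: the term $p(pk-1)$ is the writhe contribution of the torus-knot part and $\ell^2 m$ is the contribution of the $m$ full twists on $\ell$ strands. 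Finally I would verify, using the relation from part (a), that $\mu$ and $s$ commute, confirming they generate the (abelian) peripheral $\Z\oplus\Z$.

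**Main obstacle.** The main difficulty will be the bookkeeping in the second paragraph: getting every conjugating word and every exponent correct as the braid is read across the twisting region, and then checking that the two sides of the relation in part (a) genuinely describe the same closed curve. A sign error or an off-by-one in the number of twist factors would propagate into both the relation and the framing exponent $\ell^2 m$, so I would cross-check the final presentation by abelianizing (the abelianization must be $\Z$, which constrains the exponents) and by confirming that $\mu$ and $s$ commute in the presented group as an internal consistency check.
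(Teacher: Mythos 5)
Your plan follows the paper's proof essentially step for step: the paper likewise realizes the knot on a genus-two Heegaard surface, applies Seifert--van Kampen to the two handlebodies glued along $\Sigma \setminus \nu(K)$ (obtaining the relations $c = d^m$, $a^{p-\ell} = b^{(p-\ell)k-1}d$, and a third long relation, then eliminating $c$ and $d$ to get the two-generator presentation), reads $s$ directly off the surface picture, obtains $\mu = a^{-1}b^k$ from the diagram (via a $k=1$ base case plus one extra $b$ per additional twist), and fixes the framing exponent via the linking number $p(pk-1)+\ell^2 m$ of the surface push-off. The differences are cosmetic --- your extra abelianization and commutativity sanity checks, and reading $\mu$ straight from the figure rather than by the paper's induction on the number of twists --- so this is the same approach, correctly outlined.
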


\begin{proof}
Let $S^3 = U \cup_\Sigma V$ be the genus-two Heegaard splitting
of $S^3$ specified by Figure \ref{fig:pk-1_surfacegen}. 
Then $\pi_1(U)$ is the free group on the generators $a$ 
and $c$, and $\pi_1(V)$ is the free group on the generators $b$ 
and $d$ (see Figure~\ref{fig:pk-1_generators}). Using the Seifert-Van Kampen
Theorem, we can then express $G_{p,pk-1}^{\ell,m}$ as a free product with
amalgamation of $\pi_1(U)$ and $\pi_1(V)$. To do this, we need the images of
the generators of $\pi_1(\Sigma \setminus \nu (T_{p,pk-1}^{\ell,m}))$ under
inclusion into $\pi_1(U)$ and $\pi_1(V)$.

Now, $\Sigma \setminus \nu (T_{p,pk-1}^{\ell,m})$ is homotopy equivalent
to a twice-punctured genus-1 surface whose fundamental group is 
generated by the green, red, and blue loops in Figure 
\ref{fig:pk-1_surfacegen}. The green loop has image $c$ in 
$\pi_1(U)$ and image $d^m$ in $\pi_1(V)$, so we get the relation 
\[c = d^m.\]
Likewise, the red loop gives 
\[a^{p - \ell} = b^{(p - \ell)k - 1}d\]
and the blue loop gives
\[a^{p - \ell}(ac)^{\ell - 1}a = b^{(p - \ell)k - 1}(b^kd^m)^{\ell - 1}b^k.\]
Using the first two relations to solve for $c$ and $d$, we are
left with only one relation: 
\[a^{p-\ell}(a(b^{1-k(p-\ell)}a^{p-\ell})^m)^{\ell-1}a = b^{k(p-\ell)-1}(b^k(b^{1-k(p-\ell)}a^{p-\ell})^m)^{\ell-1}b^k.\]
Thus, we have
\[G_{p,pk-1}^{\ell,m} = \langle a,b \mid a^{p-\ell}(a(b^{1-k(p-\ell)}a^{p-\ell})^m)^{\ell-1}a = b^{k(p-\ell)-1}(b^k(b^{1-k(p-\ell)}a^{p-\ell})^m)^{\ell-1}b^k \rangle.\]

\begin{figure}
\labellist
\pinlabel $m$ at 331 92
\pinlabel twists at 331 87
\pinlabel \textcolor{a-color}{a} at 190 5
\pinlabel \textcolor{b-color}{b} at 150 40
\endlabellist
\centering
\includegraphics[width=\textwidth, height=6cm]{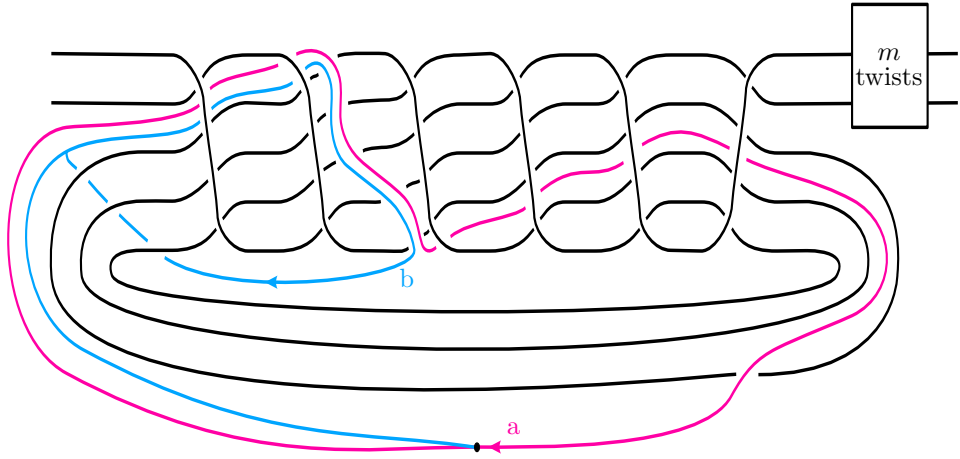}
\caption{Generators $a$ and $b$ for the knot group of $T_{5,5k-1}^{2,m}$.
We show $k = 1$ as a base case. Note that this braid is a view from the back
of the above handlebody diagrams.}
\label{fig:pk-1_ab}
\end{figure}
\begin{figure}
\labellist
\pinlabel $m$ at 327 92
\pinlabel twists at 327 87
\pinlabel \textcolor{ab-color}{$a^{-1}b$} at 190 7
\pinlabel \textcolor{mu-color}{$\mu$} at 160 8
\endlabellist
\centering
\includegraphics[width=\textwidth, height=6cm]{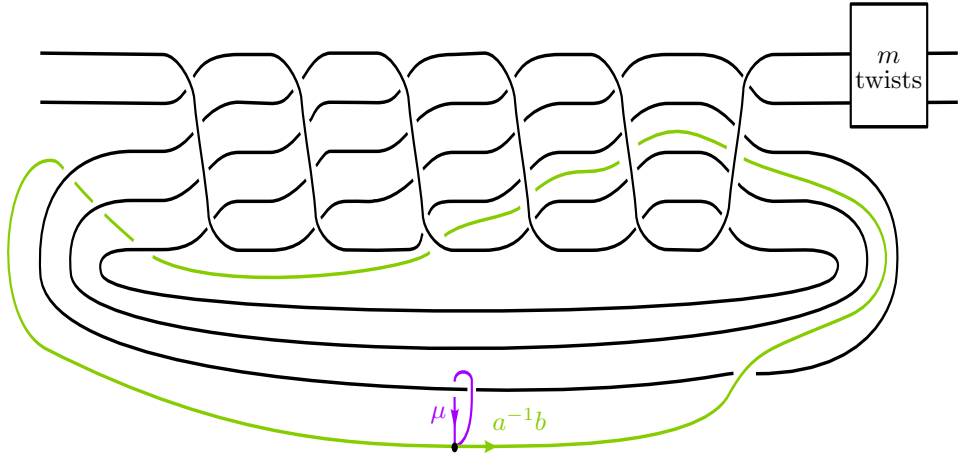}
\caption{Homotopy between $\mu$ and $a^{-1}b$ for the base case $k = 1$.}
\label{fig:pk-1_muhom}
\end{figure}

For the peripheral subgroup, we will compute the meridian $\mu$ and
the surface framing $s$ as specified in Figure \ref{fig:pk-1_peripheral}.
From Figure \ref{fig:pk-1_peripheral}, it is immediately clear that
\[s = a^{p-\ell}c(ac)^{\ell-1} a = a^{p-\ell-1}(a(b^{1-k(p-\ell)}a^{p-\ell})^m)^\ell a.\]
In order to compute $\mu$, we focus on the right half of the
handlebody in Figure \ref{fig:pk-1_peripheral}. This part of the knot
is shown along with $a$ and $b$ in Figure \ref{fig:pk-1_ab}.

As a base case, we consider $k = 1$. Figure \ref{fig:pk-1_muhom} 
demonstrates that the word $a^{-1}b$ is homotopic to $\mu$ in this case.
For larger $k$, we note that $b$ is homotopic to the core of one
full twist, as seen in Figure \ref{fig:pk-1_ab}. So, for each of
the $k - 1$ twists added to the $k = 1$ base case, we must append
one extra copy of $b$ to the end of the word $a^{-1}b$ in order to
create a word which is homotopic to $\mu$. Thus,
\[\mu = a^{-1}b^k.\]
Finally, we note that the linking number between $T_{p,q}^{\ell,m}$
and a push-off along $\Sigma$, by the construction of the twisted torus 
knot, is $pq + \ell^2 m$, which gives us
\[s = \mu^{p(pk-1) + \ell^2 m}\lambda.\]
\end{proof}

\begin{figure}
\labellist
\pinlabel ${\LARGE \underbrace{ }}$ at 125 187
\pinlabel {\small $m$ twists} at 125 173
\pinlabel $\LARGE \overbrace{ }$ at 333 160
\pinlabel {\small $k$ twists} at 333 175
\endlabellist
\centering
\includegraphics[width=\textwidth]{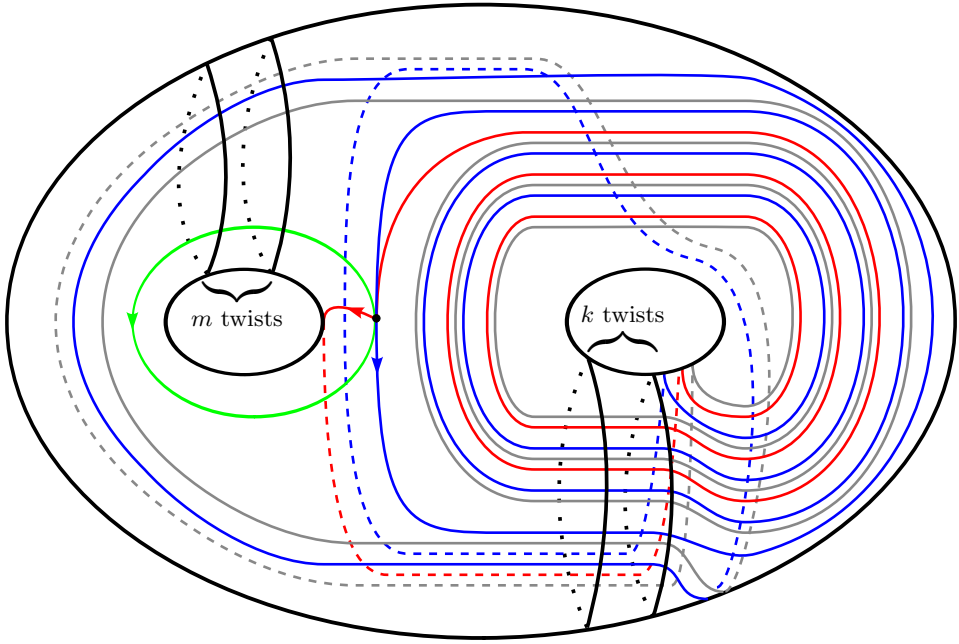}
\caption{Generators for the fundamental group of
$\Sigma \setminus \nu (T_{p,pk+1}^{\ell,m})$. Here, we show
$T_{5,5k+1}^{2,m}$ as an example. The knot is shown in gray.}
\label{fig:pk+1_surfacegen}
\end{figure}
\begin{figure}
\labellist
\pinlabel ${\LARGE \underbrace{ }}$ at 125 187
\pinlabel {\small $m$ twists} at 125 173
\pinlabel $\LARGE \overbrace{ }$ at 333 160
\pinlabel {\small $k$ twists} at 333 175
\pinlabel \textcolor{mu-color}{$\mu$} at 220 270
\pinlabel \textcolor{s-color}{s} at 180 260
\endlabellist
\centering
\includegraphics[width=\textwidth]{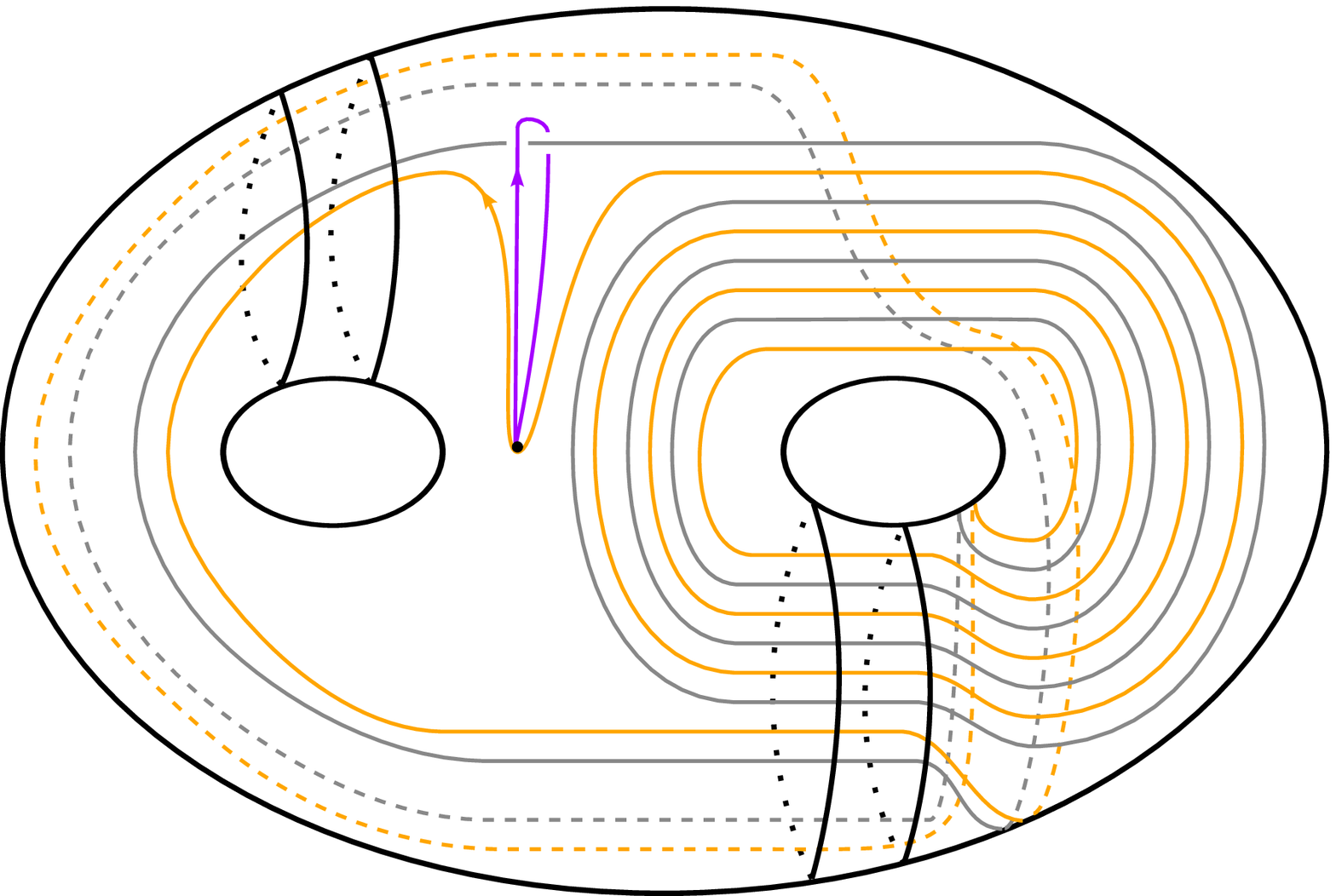}
\caption{Generators for the peripheral subgroup of $T_{p,pk+1}^{\ell,m}$.
Here, we show $T_{5,5k+1}^{2,m}$ as an example. The knot is shown in gray.}
\label{fig:pk+1_peripheral}
\end{figure}

\begin{proposition} \label{prop:pk+1}
For the $(p,pk+1,\ell,m)$-twisted torus knot,
\begin{enumerate}[label=(\alph*)]
\item The knot group is
\[G_{p,pk+1}^{\ell,m} = \langle a,b \mid a((b^{k(p-\ell)+1}a^{\ell-p})^ma)^{\ell-1}a^{p-\ell} = b^k((b^{k(p-\ell)+1}a^{\ell-p})^mb^k)^{\ell-1}b^{k(p-\ell)+1} \rangle.\]
\item The peripheral subgroup is generated by the meridian
\[\mu = b^{-k}a\]
and the surface framing 
\[s = \mu^{p(pk+1) + \ell^2m}\lambda = ((b^{k(p-\ell)+1}a^{\ell-p})^ma)^\ell a^{p-\ell}.\]
\end{enumerate}
\end{proposition}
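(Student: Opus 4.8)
The plan is to follow the proof of Proposition~\ref{prop:pk-1} essentially verbatim, using the genus-two Heegaard splitting $S^3 = U \cup_\Sigma V$ determined by Figure~\ref{fig:pk+1_surfacegen}. As in the $pk-1$ case, I would take $\pi_1(U)$ to be free on the generators $a$ and $c$ and $\pi_1(V)$ to be free on $b$ and $d$ (as in Figure~\ref{fig:pk-1_generators}), and apply the Seifert--van Kampen theorem to realize $G_{p,pk+1}^{\ell,m}$ as the free product $\pi_1(U) * \pi_1(V)$ amalgamated over the image of $\pi_1(\Sigma \setminus \nu(T_{p,pk+1}^{\ell,m}))$. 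The latter is again homotopy equivalent to a twice-punctured genus-one surface whose fundamental group is carried by the green, red, and blue loops of Figure~\ref{fig:pk+1_surfacegen}, so the entire computation for part~(a) reduces to reading off the images of these three loops in $\pi_1(U)$ and $\pi_1(V)$ and eliminating $c$ and $d$.

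Concretely, I would record the three resulting relations. The green loop still lies in the region of $m$ twists and contributes $c = d^m$. The essential change from the $pk-1$ case is that the $+1$ offset reverses the orientation with which the knot wraps around the second handle: the red loop should now give $d = b^{k(p-\ell)+1}a^{\ell-p}$ (in place of $d = b^{1-k(p-\ell)}a^{p-\ell}$), and the blue loop should give $a(ca)^{\ell-1}a^{p-\ell} = b^k(cb^k)^{\ell-1}b^{k(p-\ell)+1}$. Substituting $c = d^m$ together with the expression for $d$ into the blue-loop relation then collapses the presentation to the single relation stated in part~(a). The one step that demands genuine care is getting these exponents and, above all, the left-to-right ordering of the factors correct: comparing the target relations for $pk-1$ and $pk+1$ shows that the blocks $a^{p-\ell}$ and $(ac)^{\ell-1}$ appear in the opposite order (and that $ac$, $b^kc$ become $ca$, $cb^k$), which is exactly the signature of the orientation reversal. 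For this reason the loops must be traced directly from Figure~\ref{fig:pk+1_surfacegen} rather than transcribed from Proposition~\ref{prop:pk-1}.

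For part~(b), I would read the surface framing $s$ directly off Figure~\ref{fig:pk+1_peripheral}, obtaining $s = ((b^{k(p-\ell)+1}a^{\ell-p})^m a)^\ell a^{p-\ell}$ once $c$ and $d$ are eliminated as above. The meridian is then computed by the same base-case-plus-induction argument used in Proposition~\ref{prop:pk-1}: one first treats $k=1$ and exhibits a homotopy between $\mu$ and the word $b^{-1}a$ on the right half of the handlebody, and then prepends one copy of $b^{-1}$ for each additional full twist, yielding $\mu = b^{-k}a$. Note that this order is reversed relative to the $pk-1$ meridian $a^{-1}b^k$, again a consequence of the orientation reversal; confirming this reversal against the figure is the main subtlety of this half of the argument. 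Finally, since the twisted torus knot construction gives linking number $p(pk+1)+\ell^2 m$ between $T_{p,pk+1}^{\ell,m}$ and its push-off along $\Sigma$, I would conclude $s = \mu^{p(pk+1)+\ell^2 m}\lambda$, which completes part~(b). I expect no deep obstacle anywhere; the whole difficulty is bookkeeping the sign and ordering changes introduced by the $+1$ offset.
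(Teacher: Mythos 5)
Your proposal is correct and follows essentially the same route as the paper's proof: Seifert--van Kampen applied to the genus-two Heegaard splitting of Figure~\ref{fig:pk+1_surfacegen}, the same three loop relations (your red-loop relation $d = b^{k(p-\ell)+1}a^{\ell-p}$ and your cyclically shifted blue-loop relation are algebraically equivalent to the paper's $a^{p-\ell} = d^{-1}b^{k(p-\ell)+1}$ and $(ac)^{\ell-1}a^{p-\ell+1} = (b^kd^m)^{\ell-1}b^{k(p-\ell+1)+1}$, and your form even skips the paper's final ``rewriting slightly'' step), followed by the identical reading of $s$ from Figure~\ref{fig:pk+1_peripheral} and the same base-case-plus-induction computation of $\mu = b^{-k}a$. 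The points you flag as requiring care (the reversed ordering of blocks and the reversed meridian word relative to the $pk-1$ case) are exactly where the paper's proof also diverges from Proposition~\ref{prop:pk-1}, so there is no gap.
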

\begin{proof}
Let $S^3 = U \cup_\Sigma V$ be the genus-two Heegaard splitting
of $S^3$ specified by Figure \ref{fig:pk+1_surfacegen}. We
use the same reasoning as in Proposition \ref{prop:pk-1} to write
$G_{p,pk+1}^{\ell,m}$ as a free product with
amalgamation of $\langle a,c \rangle$ and $\langle b,d \rangle$
(see Figure~\ref{fig:pk-1_generators}).

The generators for the fundamental group of 
$\Sigma \setminus \nu (T_{5,5k+1}^{2,m})$ are shown in Figure
\ref{fig:pk+1_surfacegen}. From the green loop, we get
\[c = d^m,\]
from the red loop, we get
\[a^{p - \ell} = d^{-1}b^{k(p - \ell)+1},\]
and from the blue loop, we get
\[(ac)^{\ell - 1}a^{p - \ell + 1} = (b^kd^m)^{\ell - 1}b^{k(p - \ell + 1) + 1}.\]
Using the first two relations to solve for $c$ and $d$, we are
left with only one relation: 
\[(a(b^{k(p-\ell)+1}a^{\ell-p})^m)^{\ell - 1}a^{p - \ell + 1} = (b^k(b^{k(p-\ell)+1}a^{\ell-p})^m)^{\ell - 1}b^{k(p - \ell + 1) + 1}.\]
Rewriting this slightly to create the same form as the group 
relation in Proposition \ref{prop:pk-1}, we get
\[G_{p,pk+1}^{\ell, m} = \langle a, b \mid a((b^{k(p-\ell)+1}a^{\ell-p})^ma)^{\ell - 1}a^{p - \ell} = b^k((b^{k(p-\ell)+1}a^{\ell-p})^mb^k)^{\ell - 1}b^{k(p - \ell) + 1} \rangle.\] 

\begin{figure}
\labellist
\pinlabel $m$ at 331 92
\pinlabel twists at 331 87
\pinlabel \textcolor{a-color}{a} at 190 5
\pinlabel \textcolor{b-color}{b} at 150 40
\endlabellist
\centering
\includegraphics[width=\textwidth, height=6cm]{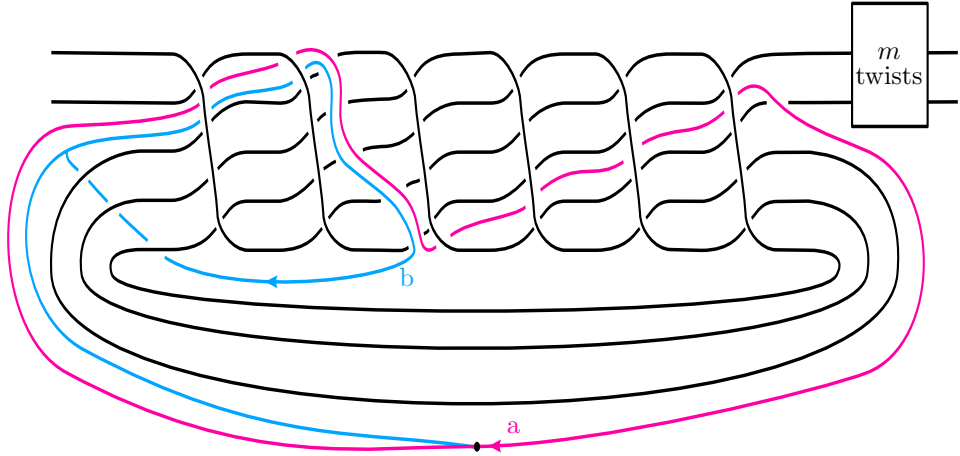}
\caption{Generators $a$ and $b$ for the knot group of $T_{5,5k+1}^{2,m}$.
Here, we show $k = 1$ as a base case. Note that this braid is a view from the
back of the above handlebody diagrams.}
\label{fig:pk+1_ab}
\end{figure}
\begin{figure}
\labellist
\pinlabel $m$ at 327 92
\pinlabel twists at 327 87
\pinlabel \textcolor{ab-color}{$b^{-1}a$} at 155 6
\pinlabel \textcolor{mu-color}{$\mu$} at 330 10 
\endlabellist
\centering
\includegraphics[width=\textwidth, height=6cm]{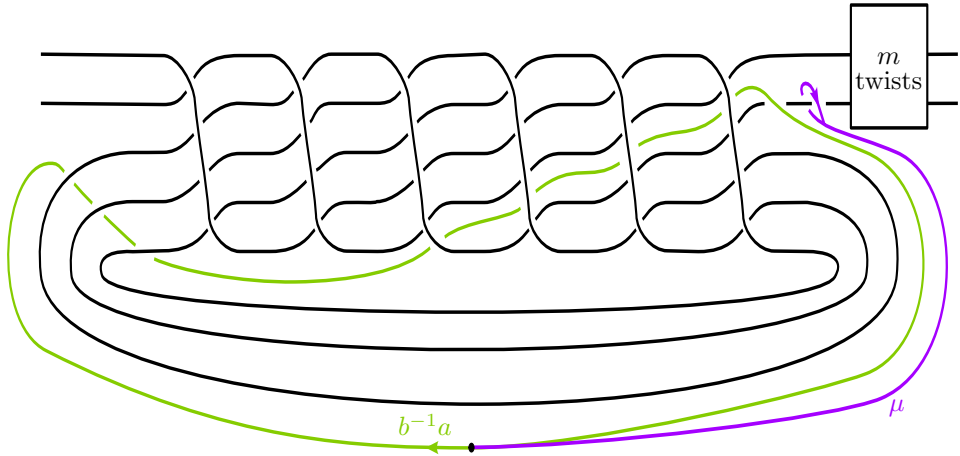}
\caption{Homotopy between $\mu$ and $b^{-1}a$ for the base case $k = 1$.}
\label{fig:pk+1_muhom}
\end{figure}

The reasoning for the peripheral subgroup also follows that of
Proposition \ref{prop:pk-1} very closely. It is immediate from Figure
\ref{fig:pk+1_peripheral} that
\[s = (ca)^{\ell}a^{p-\ell} = ((b^{k(p-\ell)+1}a^{\ell-p})^ma)^{\ell}a^{p-\ell}.\]

To compute $\mu$, we focus on the right half of the
handlebodies in Figure \ref{fig:pk+1_peripheral}. This part of the knot is
shown along with $a$ and $b$ in Figure \ref{fig:pk+1_ab}. We consider $k = 1$
as a base case: here, Figure \ref{fig:pk+1_muhom} demonstrates that the word
$b^{-1}a$ is homotopic to $\mu$. For larger $k$, we note that $b$ is homotopic
to the core of one full twist, as depicted in Figure \ref{fig:pk+1_ab}. So, for
each of the $k - 1$ twists added to the $k = 1$ base case, we must append one
extra copy of $b^{-1}$ to the start of the word $b^{-1}a$ in order to create a
word which is homotopic to $\mu$. Thus,
\[\mu = b^{-k}a.\]
Finally, the linking number between $T_{p,pk+1}^{\ell,m}$ and a
push-off along $\Sigma$ is $p(pk+~1)+~\ell^2m$, so that
\[s = \mu^{p(pk+1)+\ell^2m}\lambda.\]
\end{proof}

\section{Non-Left-Orderability of L-Space Twisted Torus Knots}
\label{sec:LO}
In this section, we prove Theorem~\ref{thm:CGHV1}.
The proof relies on applications of Theorem~\ref{thm:CGHVCond} to the twisted
torus knots specified in cases 1 and 2 of Theorem~\ref{thm:Vaf}.

First, we note that for all the $T_{p,q}^{\ell,m}$ which are L-space 
knots, we have knot groups with 2 generators, and we have computed expressions
for the meridian $\mu$ and the $pq+\ell^2m$-framed longitude $s$ (see Propositions
\ref{prop:pk-1} and \ref{prop:pk+1}). To apply Theorem \ref{thm:CGHVCond},
then, we have to find generators $x$ and $y$ such that
\begin{enumerate}
\item \label{enumit:step1} 
for any homomorphism $\Phi: G_{p,q}^{\ell,m}(r) \rightarrow \Homeo+(\R)$, $xt > t$
for all $t \in \R$ implies $yt \geq t$ for all $t \in \R$; and
\item \label{enumit:step2}
$x$ is (a conjugate of) $\mu$, and (the corresponding conjugate of) $s$ can
be written with only positive powers of $x$ and $y$ and at least one $x$. 
\end{enumerate}
Then, we can apply Theorem~\ref{thm:CGHVCond} to conclude that $r$-surgery on 
$T_{p,q}^{\ell,m}$ yields a 3-manifold with non-left-orderable
fundamental group for all $r \geq pq + \ell^2 m$.

We begin with two lemmas that verify condition \ref{enumit:step1} of
the above list for the first two cases in Theorem~\ref{thm:Vaf}.
\begin{lemma}
\label{lem:pk-1_step1}
Consider $G_{p,pk-1}^{\ell,m}$ as in Proposition~\ref{prop:pk-1}. Let
$x = a^{-1}b^k$ and $y = b^{1-k}a$. Then, $x$ is a meridian of
$T_{p,pk-1}^{\ell,m}$, $x$ and $y$ generate $G_{p, pk-1}^{\ell,m}$, and 
for any homomorphism $\Phi: G_{p, pk-1}^{\ell, m}(r) \rightarrow \Homeo+(\R)$, $xt > t$ for all real
$t$ implies that $yt \geq t$ for all real $t$.
\end{lemma}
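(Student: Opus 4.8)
The plan is to verify the three claims of the lemma in turn — that $x$ is a meridian, that $x$ and $y$ generate the group, and the key implication about homomorphisms to $\Homeo+(\R)$ — with the last being the substantive part.

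The first two claims should be quick. From Proposition~\ref{prop:pk-1}(b) we already know the meridian is $\mu = a^{-1}b^k$, so $x = \mu$ is immediate. For the generation claim, I would observe that $x = a^{-1}b^k$ and $y = b^{1-k}a$ together let us recover $a$ and $b$: computing the product $xy = a^{-1}b^k b^{1-k}a = a^{-1}ba$ shows $b$ is conjugate to $xy$, and more usefully I would solve the linear system in the exponents to express $a$ and $b$ as words in $x$ and $y$. Concretely, $yx = b^{1-k}a\,a^{-1}b^k = b$, so $b = yx$, and then $a = b^{k}x^{-1} = (yx)^k x^{-1}$. Hence $\langle x, y\rangle = \langle a, b\rangle = G_{p,pk-1}^{\ell,m}$.

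The heart of the lemma is the implication: assuming $\Phi(x)t > t$ for all $t$, deduce $\Phi(y)t \geq t$ for all $t$. Here I would argue by contradiction, using the single defining relation of $G_{p,pk-1}^{\ell,m}(r)$ together with the order-preserving structure of $\Homeo+(\R)$. Writing the relation from Proposition~\ref{prop:pk-1}(a) in the form $L = R$ where
\[
L = a^{p-\ell}\bigl(a(b^{1-k(p-\ell)}a^{p-\ell})^m\bigr)^{\ell-1}a, \qquad R = b^{k(p-\ell)-1}\bigl(b^k(b^{1-k(p-\ell)}a^{p-\ell})^m\bigr)^{\ell-1}b^k,
\]
the strategy is to re-express both sides in terms of $x$ and $y$ and then, supposing for contradiction that $\Phi(y)t_0 < t_0$ at some point $t_0$, track the action of each side of the relation on $t_0$. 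The hypothesis $\Phi(x)t > t$ for all $t$ gives a monotonicity handle: any subword built only from $x$ (and, if $y$ is shown to shift points downward, from $y^{-1}$) moves $t_0$ strictly in a controlled direction. The goal is to show that $L$ and $R$ move $t_0$ in strictly incompatible ways — one strictly above and one strictly below, or one fixing and one moving — contradicting $Lt_0 = Rt_0$.

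\emph{The main obstacle} will be the bookkeeping needed to compare the two sides: because $a = (yx)^k x^{-1}$ and $b = yx$ mix positive powers of $x$ with the sign-indefinite generator $y$, neither $L$ nor $R$ is manifestly a positive or negative word in $x$ and $y$ after substitution. The real work is to choose the right rewriting of the relation (or the right auxiliary point, such as $\sup\{t : \Phi(y)t \leq t\}$ or an analogous fixed-point-boundary argument) so that the hypothesis $\Phi(x)t>t$ forces a definite sign on each side. I expect the cleanest route is to assume $\Phi(y)$ has a point it moves down, pass to a boundary point of the set where $y$ fails to move points up, and exploit that $x$ strictly increases every point there to derive that the two sides of the relation cannot agree; the combinatorial care in handling the exponents $p-\ell$, $k(p-\ell)$, and the $m$-fold and $(\ell-1)$-fold iterations is where the proof will demand the most attention.
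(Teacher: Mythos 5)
Your handling of the first two claims is fine ($yx=b$ and $a=b^kx^{-1}$ do generate, matching the paper, which uses the equivalent positive word $a=(yx)^{k-1}y$). But for the main implication there is a genuine gap: your proposal stops exactly where the proof has to start. You set up a contradiction via a boundary point of $\{t:\Phi(y)t\le t\}$ and then defer the ``bookkeeping'' as the real work, without identifying the structural fact about the relation that any such argument would need. That fact is the following, and it hinges on choosing the \emph{positive} rewriting $a=(yx)^{k-1}y$, $b=yx$ (your substitution $a=(yx)^kx^{-1}$ obscures it). With this choice the relation of Proposition~\ref{prop:pk-1} becomes
\[((yx)^{k-1}y)^{p-\ell}((yx)^{k-1}yC^m)^{\ell-1}(yx)^{k-1}y=(yx)^{k(p-\ell)-1}((yx)^kC^m)^{\ell-1}(yx)^k,\]
where $C=b^{1-k(p-\ell)}a^{p-\ell}$, and the key observation is that inserting finitely many letters $x$ into the left-hand side (one after each block $(yx)^{k-1}y$ except the last block of the first factor) turns it into
\[(yx)^{k(p-\ell)-1}\,y\,((yx)^kC^m)^{\ell-1}(yx)^k,\]
which is exactly the right-hand side with one extra $y$. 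The subword $C^m$ is indeed not a positive word --- this is the obstacle you flag --- but it occurs in identical positions on both sides, so its sign is irrelevant; only the discrepancy between the two sides matters, and that discrepancy consists of $x$'s and a single $y$.

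Given this observation, the paper's argument is direct, with no contradiction and no fixed-point-boundary analysis: since $xt>t$ for all $t$, each insertion of $x$ strictly raises the image of every point (if $w_1t=w_2t$ and $w_1=w_3w_4$, then $w_3xw_4t>w_1t=w_2t$), so writing $w=((yx)^kC^m)^{\ell-1}(yx)^k$ one gets
\[(yx)^{k(p-\ell)-1}y\,wt>(yx)^{k(p-\ell)-1}wt \quad\text{for all } t.\]
Because $w$ is a surjective homeomorphism of $\R$, every $t'$ equals $wt$ for some $t$, and because $(yx)^{k(p-\ell)-1}$ is order-preserving it cancels from both sides, yielding $yt'>t'$ for all $t'$ (even strictly). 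Your plan, by contrast, never supplies a mechanism by which the two sides of the relation are ``forced to move $t_0$ in incompatible ways''; without the insertion observation the two sides cannot be compared at all, so the central step of the lemma is missing.
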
 
\begin{proof}
By Proposition~\ref{prop:pk-1}, $x$ is a meridian of $T_{p,pk-1}^{\ell,m}$. Moreover,
$b = yx$ and $a = (yx)^{k-1}y$, so $x$ and $y$ generate $G_{p, pk-1}^{\ell,m}$.
Next, we examine the group relation from Proposition~\ref{prop:pk-1} in terms
of $x$ and $y$:
\[((yx)^{k-1}y)^{p-\ell}((yx)^{k-1}yC^m)^{\ell - 1}(yx)^{k-1}y = (yx)^{k(p-\ell)-1}((yx)^kC^m)^{\ell - 1}(yx)^k\]
where $C = b^{1-k(p-\ell)}a^{p-\ell}$.
If we assume $xt > t$ for all $t$, we can add $x$ anywhere we want
on one side of the relation to get a strict inequality on any $t$. In
symbols, if we have $w_1t=w_2t$ and $w_3w_4 = w_1$, then we know
that $xw_4t > w_4t$, so $w_3xw_4t > w_3w_4t = w_1t = w_2t$. Adding
$x$ multiple times to the left side of the relation, we get
\[(yx)^{k(p-\ell)-1}y((yx)^kC^m)^{\ell - 1}(yx)^kt > (yx)^{k(p-\ell)-1}((yx)^kC^m)^{\ell - 1}(yx)^kt.\]
Since every word corresponds to a homeomorphism on $\R$, we know that
for all $t' \in \R$ there exists $t$ such that
$((yx)^kC^m)^{\ell - 1}(yx)^kt = t'$. Thus, we have
\[(yx)^{k(p-\ell)-1}yt' > (yx)^{k(p-\ell)-1}t'\]
which implies that 
\[yt' > t'\]
for all $t' \in \R$.
\end{proof}
\begin{lemma}
\label{lem:pk+1_step1}
Consider $G_{p,pk+1}^{\ell,m}$ as in Proposition~\ref{prop:pk+1}. Let 
$x = b^{-k}a$ and $y = a^{-1}b^{k+1}$. Then, $x$ is a meridian
of $T_{p,pk+1}^{\ell,m}$, $x$ and $y$ generate $G_{p, pk+1}^{\ell,m}$, and
for any homomorphism $\Phi: G_{p, pk+1}^{\ell, m}(r) \rightarrow \Homeo+(\R)$, $xt > t$ for all real $t$
implies $yt \geq t$ for all real $t$.
\end{lemma}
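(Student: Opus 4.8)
The plan is to mirror the proof of Lemma~\ref{lem:pk-1_step1}, but with the two sides of the defining relation playing reversed roles. First I would record the two bookkeeping facts. That $x=b^{-k}a$ is a meridian is immediate from Proposition~\ref{prop:pk+1}, where $\mu=b^{-k}a$. For generation I would solve for $a$ and $b$: since $xy = b^{-k}a\cdot a^{-1}b^{k+1}=b$, we have $b=xy$ and hence $a=b^{k}x=(xy)^{k}x$, so $\langle x,y\rangle = \langle a,b\rangle = G_{p,pk+1}^{\ell,m}$.

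For the main implication I would rewrite the relation of Proposition~\ref{prop:pk+1} in terms of $x,y$ by substituting $b=xy$ and $a=(xy)^{k}x$, while keeping the conjugating block $D=b^{k(p-\ell)+1}a^{\ell-p}$ as an opaque symbol (the analogue of $C$ in Lemma~\ref{lem:pk-1_step1}, which is essential since expanding $D$ reintroduces $y^{-1}$'s). This turns the relation into $L=R$ with
\[ L=(xy)^{k}x\,(D^{m}(xy)^{k}x)^{\ell-1}((xy)^{k}x)^{p-\ell},\qquad R=(xy)^{k}(D^{m}(xy)^{k})^{\ell-1}(xy)^{k(p-\ell)+1}. \]
Counting occurrences of $y$ on each side (each syllable contributes, and $D$ contributes equally to both) I expect to find that $R$ carries exactly one more $y$ than $L$. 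This is the reverse of the situation in Lemma~\ref{lem:pk-1_step1}, so here I would insert copies of $x$ into the right-hand side rather than the left.

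Assuming $xt>t$ for all $t$, I would use that inserting a letter $x$ anywhere into a word strictly raises the value of the homeomorphism it represents, since $x$ pushes every point up and the remaining letters act by order-preserving maps. The crux is then to produce the word $L\,y$ — namely $L$ with a single $y$ appended at the very end — out of $R$ using \emph{only} insertions of $x$. Two observations should make this work: deleting every $x$ from $R$ and from $L\,y$ leaves the identical word $y^{k}(D^{m}y^{k})^{\ell-1}y^{k(p-\ell)+1}$ in the letters $y$ and $D$; and, aligning the two along these common non-$x$ letters, $R$ never carries more $x$'s than $L\,y$ in any gap between consecutive non-$x$ letters, the slack being supplied by the trailing $x$ of each syllable $a=(xy)^{k}x$ of $L$, which $R$ lacks. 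Granting the reshaping, $(L\,y)\,t> R\,t = L\,t$ for every $t$, and cancelling the order-preserving map $L$ on the left yields $yt>t$, which is stronger than the required $yt\ge t$.

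I expect the main obstacle to be precisely this combinatorial matching: confirming that the repeated $D$-blocks and the mismatched tails $((xy)^{k}x)^{p-\ell}$ versus $(xy)^{k(p-\ell)+1}$ line up so that no cancellations are needed and the single surplus $y$ can be driven all the way to the end. The delicate point is the block boundaries inside $a^{p-\ell}$, where two trailing/leading $x$'s accumulate; I would verify the gap inequality type by type ($y\to y$, $y\to D$, $D\to D$, $D\to y$, and the two ends) to see that domination holds for all admissible $p,\ell,k,m$. Once that bookkeeping is in place the conclusion follows exactly as in the reversed analogue of Lemma~\ref{lem:pk-1_step1}.
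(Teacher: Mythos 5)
Your proposal is correct and follows essentially the same route as the paper: rewrite the relation with the block $C=b^{k(p-\ell)+1}a^{\ell-p}$ kept opaque, insert $x$'s into the right-hand side to produce $L\,y$, and cancel the order-preserving map $L$ to conclude $yt>t$. The gap-by-gap domination you flag as the remaining bookkeeping does hold (the trailing $x$ of each $a=(xy)^k x$ supplies the needed slack), and the paper in fact asserts this insertion step without even spelling out that verification.
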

\begin{proof}
By Proposition~\ref{prop:pk+1}, $x$ is a meridian of $T_{p,pk+1}^{\ell,m}$. Moreover,
$b = xy$ and $a = (xy)^kx$, so $x$ and $y$ generate $G_{p, pk+1}^{\ell,m}$. Using the group relation
from Proposition \ref{prop:pk+1} and rewriting in terms of $x$ and $y$, we get
\[(xy)^kx(C^m(xy)^kx)^{\ell-1}((xy)^kx)^{p-\ell} = (xy)^k(C^m(xy)^k)^{\ell-1}(xy)^{k(p-\ell)+1}\]
where $C = b^{k(p-\ell)+1}a^{\ell-p}$. Assume $xt > t$ for all 
$h \in \R$. Then, adding $x$'s to the right-hand side of the above
equation gives us
\[(xy)^kx(C^m(xy)^kx)^{\ell-1}((xy)^kx)^{p-\ell}t < (xy)^kx(C^m(xy)^kx)^{\ell-1}((xy)^kx)^{p-\ell}yt.\]
Now, the word $(xy)^kx(C^mx(xy)^kx)^{\ell-1}((xy)^kx)^{p-\ell}$
corresponds to an order-preserving homeomorphism on $\R$. By
order preservation, the above inequality can only be true if
\[t < yt\]
for all $t \in \R$.
\end{proof}
Theorem~\ref{thm:CGHV1} follows from the next four propositions,
which verify condition \ref{enumit:step2} of the list outlined above.
\begin{proposition}
$M_{p,pk-1}^{p-1,m}(r)$ has a non-left-orderable fundamental
group for $r \geq p(pk-1) +(p-1)^2m$.
\end{proposition}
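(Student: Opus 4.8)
The plan is to verify, for the knot $T_{p,pk-1}^{p-1,m}$, the two conditions \ref{enumit:step1} and \ref{enumit:step2} set out at the beginning of Section~\ref{sec:LO}, and then to invoke Theorem~\ref{thm:CGHVCond} with $v = p(pk-1)+(p-1)^2m$. Condition~\ref{enumit:step1} is supplied directly by Lemma~\ref{lem:pk-1_step1} specialized to $\ell = p-1$: with $x = a^{-1}b^k$ and $y = b^{1-k}a$, we already know that $x$ is a meridian, that $x$ and $y$ generate $G_{p,pk-1}^{p-1,m}$, and that any $\Phi\colon G_{p,pk-1}^{p-1,m}(r)\to\Homeo+(\R)$ with $xt>t$ for all $t$ satisfies $yt\geq t$ for all $t$. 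The remaining work is therefore condition~\ref{enumit:step2}, i.e.\ rewriting the surface framing $s$ as a positive word in $x$ and $y$ that contains at least one $x$.

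First I would specialize the formula $s = a^{p-\ell-1}(a(b^{1-k(p-\ell)}a^{p-\ell})^m)^\ell a$ from Proposition~\ref{prop:pk-1} to $\ell = p-1$, so that $p-\ell = 1$. The prefactor $a^{p-\ell-1}$ then becomes trivial, and each inner block $b^{1-k(p-\ell)}a^{p-\ell}$ collapses to $b^{1-k}a$, which is exactly $y$. Hence $s = (ay^m)^{p-1}a$. Substituting the identities $b = yx$ and $a = (yx)^{k-1}y$ recorded in the proof of Lemma~\ref{lem:pk-1_step1} yields $s = ((yx)^{k-1}y^{m+1})^{p-1}(yx)^{k-1}y$, a word involving only positive powers of $x$ and $y$. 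Since $s$ is the $v$-framed longitude for $v = pq+\ell^2m = p(pk-1)+(p-1)^2m$, an application of Theorem~\ref{thm:CGHVCond} then gives that $G_{p,pk-1}^{p-1,m}(r)$ is not left-orderable for all $r\geq v$, which is the claimed bound.

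The main obstacle is making sure the word for $s$ genuinely satisfies the hypotheses of Theorem~\ref{thm:CGHVCond}: absence of $x^{-1}$ and $y^{-1}$ is immediate from the displayed form, but one must also confirm that $s$ contains at least one $x$. For $k\geq 2$ this is clear, since the factor $(yx)^{k-1}$ contributes an $x$. The delicate point is the degenerate case $k=1$, where $(yx)^{k-1}$ is empty and $s$ collapses to the pure power $y^{(m+1)(p-1)+1}$; there the ``at least one $x$'' hypothesis fails for this choice of generators, so I expect this boundary case to demand a separate argument (for instance a strict bound $r>v$, or a different completion of the generating set). Accordingly, I would isolate $k=1$ and dispatch the generic case $k\geq 2$ by the substitution above.
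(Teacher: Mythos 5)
For $k\geq 2$ your argument coincides with the paper's proof of this proposition: specialize the formula for $s$ in Proposition~\ref{prop:pk-1} to $\ell=p-1$, substitute $b=yx$ and $a=(yx)^{k-1}y$ to obtain $s=((yx)^{k-1}y^{m+1})^{p-1}(yx)^{k-1}y$, and feed this positive word, together with Lemma~\ref{lem:pk-1_step1}, into Theorem~\ref{thm:CGHVCond} with $v=p(pk-1)+(p-1)^2m$, noting $\lambda=\mu^{-v}s$.

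Your caveat about $k=1$ is not a defect of your write-up; it identifies a genuine gap in the paper's own proof. The paper asserts that the displayed word ``contains only positive powers of $x$ and $y$ and at least one $x$,'' but when $k=1$ the factors $(yx)^{k-1}$ are empty and $s=y^{(m+1)(p-1)+1}$, so the hypothesis of Theorem~\ref{thm:CGHVCond} that $s$ contain at least one $x$ fails and the theorem cannot be invoked as stated (for any $r$, not just $r=v$). The case can in fact be closed without weakening the bound to $r>v$. Run Case 2 of the proof of Theorem~\ref{thm:CGHVCond} with $s=y^N$, $N=(m+1)(p-1)+1$: if $xt>t$ for all $t$, then $yt\geq t$ for all $t$ by Lemma~\ref{lem:pk-1_step1}, hence $s^qt\geq t$; since $s^q=x^{qv-p}$, this is already a contradiction when $p/q>v$, and when $p/q=v$ it gives $\Phi(y)^{Nq}=\id$, which together with $yt\geq t$ forces $\Phi(y)=\id$. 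The group relation of Proposition~\ref{prop:pk-1} for $k=1$, $\ell=p-1$ reads $y^{(m+1)(p-2)+2}=(yxy^m)^{p-2}yx$ in terms of $x$ and $y$, so $\Phi(y)=\id$ yields $\Phi(x)^{p-1}=\id$, contradicting $xt>t$; Case 1 of the theorem's proof applies verbatim because $s$ contains a $y$. (Alternatively, one can observe that for $k=1$ the knot $T_{p,p-1}^{p-1,m}$ is a torus knot, e.g.\ $T_{3,2}^{2,1}=T_{2,5}$, so its large surgeries are Seifert fibered and non-left-orderability follows from \cite{BGW13}.) Either supplement would make both your proof and the paper's complete.
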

\begin{proof}
To satisfy the hypotheses of Theorem~\ref{thm:CGHVCond}, by Lemma~\ref{lem:pk-1_step1},
it is sufficient to check that $s$ can be written with only positive powers
of $x$ and $y$, with $x$ and $y$ as defined in the lemma.
In terms of $x$ and $y$, the expression for $s$ in Proposition~\ref{prop:pk-1}
with $\ell=p-1$ becomes
\[s = (a(b^{1-k}a)^m)^{p-1}a = ((yx)^{k-1}y^{m+1})^{p-1}(yx)^{k-1}y\]
which contains only positive powers of $x$ and $y$ and at least one
$x$. We can then apply Theorem \ref{thm:CGHVCond}, noting that 
$\lambda = \mu^{-p(pk-1) - (p-1)^2m}s$.
\end{proof}
\begin{proposition}
$M_{p,pk-1}^{p-2,1}(r)$ has a non-left-orderable fundamental
group for $r \geq p(pk-1) + (p-2)^2$
\end{proposition}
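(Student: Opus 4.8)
The plan is to verify the two conditions listed at the start of Section~\ref{sec:LO} and then invoke Theorem~\ref{thm:CGHVCond}. Since we are in the $pk-1$ family, condition~\ref{enumit:step1} comes for free: Lemma~\ref{lem:pk-1_step1} is stated for arbitrary $\ell$ and $m$, so with $x = a^{-1}b^k$ and $y = b^{1-k}a$ we immediately obtain that $x$ is a meridian, that $x$ and $y$ generate $G_{p,pk-1}^{p-2,1}$, and that every homomorphism $\Phi : G_{p,pk-1}^{p-2,1}(r)\to\Homeo+(\R)$ satisfies $xt>t$ for all $t \Rightarrow yt\geq t$ for all $t$. All of the work therefore goes into condition~\ref{enumit:step2}: rewriting the surface framing $s$ as a word in $x$ and $y$ using only positive powers and at least one $x$.

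First I would specialize the expression for $s$ in Proposition~\ref{prop:pk-1}(b) to $\ell = p-2$, $m=1$ (so $p-\ell = 2$), obtaining
\[ s = a\left(ab^{1-2k}a^2\right)^{p-2}a. \]
The naive approach of substituting $a = (yx)^{k-1}y$ and $b = yx$ directly fails, because the factor $b^{1-2k}$ introduces negative powers of $yx$ that do not cancel on their own. The key idea is instead to simplify $s$ inside the group before changing generators, using the defining relation. Specializing the relation in Proposition~\ref{prop:pk-1}(a) to $\ell = p-2$, $m=1$ gives
\[ a^2\left(ab^{1-2k}a^2\right)^{p-3}a = b^{2k-1}\left(b^{1-k}a^2\right)^{p-3}b^k. \]
I would then regroup $s$ as $a^2 b^{1-2k}\bigl[a^2\left(ab^{1-2k}a^2\right)^{p-3}a\bigr]$, recognize the bracketed factor as the left-hand side of this relation, and replace it by the right-hand side; the powers of $b$ telescope, since $b^{1-2k}\cdot b^{2k-1}=b^0$, leaving the far simpler expression
\[ s = a^2\left(b^{1-k}a^2\right)^{p-3}b^k. \]

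Finally I would substitute $a = (yx)^{k-1}y$ and $b = yx$ into this reduced form. The only potentially dangerous factor is $b^{1-k}a^2$, but a one-line computation shows $b^{1-k}a^2 = y(yx)^{k-1}y$, which is positive; as $a^2$ and $b^k$ are visibly positive as well, $s$ becomes a positive word in $x$ and $y$ containing at least one $x$. With both conditions now in hand, and noting that $s = \mu^{p(pk-1)+(p-2)^2}\lambda$ is a $v$-framed longitude with $v = p(pk-1)+(p-2)^2$, Theorem~\ref{thm:CGHVCond} applies and shows that $G_{p,pk-1}^{p-2,1}(r)$ is not left-orderable for all $r \geq p(pk-1)+(p-2)^2$, exactly as claimed.

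The main obstacle is the reduction step. Unlike the $\ell=p-1$ case, where the block $b^{1-k}a$ collapses directly to $y$ and no relation is needed, here the straightforward substitution produces uncancellable negative $b$-powers, and one must first spot that a suitable substring of $s$ coincides verbatim with the left-hand side of the group relation, so that a single application of the relation eliminates the negative powers before passing to the generators $x$ and $y$.
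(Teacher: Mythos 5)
Your proof is correct and follows essentially the same route as the paper: both specialize $s$ to $\ell=p-2$, $m=1$, apply the group relation once to eliminate the negative powers of $b$, and then substitute $a=(yx)^{k-1}y$, $b=yx$ to exhibit $s$ as a positive word containing an $x$ before invoking Theorem~\ref{thm:CGHVCond}. The only difference is cosmetic: you peel the first factor of $(ab^{1-2k}a^2)^{p-2}$ and apply the relation to the trailing block, getting $s=a^2(b^{1-k}a^2)^{p-3}b^k$, while the paper peels the last factor and gets $s=a^{-1}b^{2k-1}(b^{1-k}a^2)^{p-2}a$; both are valid positive rewritings of the same element.
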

\begin{proof}
It is again sufficient to check that $s$ only contains positive powers of
$x$ and $y$, with $x$ and $y$ as in Lemma~\ref{lem:pk-1_step1}. 
Now, we consider the expression for $s$ from Proposition~\ref{prop:pk-1}
with $\ell=p-2$ and $m=1$ and rewrite it using the group relation:
\[s = a^{-1}a^2(ab^{-2k+1}a^2)^{p-2}a = a^{-1}b^{2k-1}(b^{1-k}a^2)^{p-2}a.\]
In terms of $x$ and $y$, this becomes
\[s = x(yx)^{k-1}(y(yx)^{k-1}y)^{p-2}(yx)^{k-1}y\]
which contains only positive powers of $x$ and $y$ and at least
one $x$. We can then apply Theorem \ref{thm:CGHVCond}, noting that
$\lambda = \mu^{-p(pk-1) - (p-2)^2}s$.
\end{proof}
\begin{proposition}
$M_{p,pk+1}^{p-1,m}(r)$ has a non-left-orderable fundamental
group for $r \geq p(pk+1) + (p-1)^2m$.
\end{proposition}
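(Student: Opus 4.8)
The plan is to apply Theorem~\ref{thm:CGHVCond} exactly as in the three preceding propositions. Condition~\ref{enumit:step1} has already been verified for this family by Lemma~\ref{lem:pk+1_step1}: with $x = b^{-k}a$ and $y = a^{-1}b^{k+1}$, we know that $x$ is a meridian, that $x$ and $y$ generate $G_{p,pk+1}^{p-1,m}$, and that any homomorphism $\Phi: G_{p,pk+1}^{p-1,m}(r) \to \Homeo+(\R)$ satisfying $xt > t$ for all $t$ must satisfy $yt \geq t$ for all $t$. Thus the only remaining task is to verify condition~\ref{enumit:step2}, namely that the surface framing $s$ can be rewritten as a word in $x$ and $y$ using only positive powers and containing at least one $x$.

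First I would specialize the formula for $s$ from Proposition~\ref{prop:pk+1} to $\ell = p-1$. Then $p - \ell = 1$, $\ell - p = -1$, and $k(p-\ell)+1 = k+1$, so
\[s = \left((b^{k+1}a^{-1})^m a\right)^{p-1} a.\]
The crucial observation is that $b^{k+1}a^{-1} = a(a^{-1}b^{k+1})a^{-1} = a\,y\,a^{-1}$, so the inner factor telescopes:
\[(b^{k+1}a^{-1})^m a = (a\,y\,a^{-1})^m a = a\,y^m a^{-1} a = a\,y^m.\]
Hence $s = (a y^m)^{p-1} a$, and substituting $a = (xy)^k x$ from Lemma~\ref{lem:pk+1_step1} yields
\[s = \left((xy)^k x\, y^m\right)^{p-1}(xy)^k x,\]
which involves only positive powers of $x$ and $y$ and contains at least one $x$, as required.

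With both conditions in hand, I would finish by invoking Theorem~\ref{thm:CGHVCond}, taking $s$ as the $v$-framed longitude with $v = p(pk+1) + (p-1)^2 m$ (the surface framing of Proposition~\ref{prop:pk+1} specialized to $\ell = p-1$) and noting $\lambda = \mu^{-p(pk+1)-(p-1)^2 m} s$. This gives non-left-orderability of $G_{p,pk+1}^{p-1,m}(r)$ for all $r \geq v = p(pk+1) + (p-1)^2 m$.

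I expect the only nonroutine step to be the telescoping identity $b^{k+1}a^{-1} = a\,y\,a^{-1}$. In the $pk-1$ case the analogous factor $b^{1-k}a$ is literally equal to $y$, so no conjugation is needed; here the relevant factor is instead a conjugate of $y$, and one must conjugate by $a$ before the $m$-th power collapses cleanly. Once this is spotted, the rest is a direct substitution paralleling the proof of the proposition for $M_{p,pk-1}^{p-1,m}(r)$.
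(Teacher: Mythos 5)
Your proof is correct and takes essentially the same approach as the paper: verify condition~\ref{enumit:step2} by rewriting $s$ in positive powers of $x$ and $y$ and then invoke Theorem~\ref{thm:CGHVCond} with $v = p(pk+1)+(p-1)^2m$. Your telescoping via $b^{k+1}a^{-1} = aya^{-1}$ produces $\left((xy)^k x y^m\right)^{p-1}(xy)^k x$, which is literally the same word as the paper's $\left((xy)^{k+1}y^{m-1}\right)^{p-1}(xy)^k x$ (since $(xy)^{k+1}y^{m-1} = (xy)^k x y^m$); the paper merely cancels the final $a^{-1}a$ inside $(b^{k+1}a^{-1})^m a$ instead of conjugating.
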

\begin{proof}
To satisfy the hypotheses of Theorem~\ref{thm:CGHVCond}, by
Lemma~\ref{lem:pk+1_step1}, it is sufficient to check that
$s$ can be written with only positive powers of $x$ and $y$, with $x$ and $y$
as defined in the lemma.
Using the expression for $s$ in Proposition~\ref{prop:pk+1} with $\ell=p-1$, we get
\[s = ((b^{k+1}a^{-1})^ma)^{p-1}a = (b^{k+1}(a^{-1}b^{k+1})^{m-1})^{p-1}a = ((xy)^{k+1}y^{m-1})^{p-1}(xy)^kx\]
which contains only positive powers of $x$ and $y$, with at
least one $x$. We can then apply Theorem \ref{thm:CGHVCond}, noting that
$\lambda = \mu^{-p(pk+1)-(p-1)^2m}s$.
\end{proof}
\begin{proposition}
$M_{p,pk+1}^{p-2,1}(r)$ has a non-left-orderable fundamental
group for $r \geq p(pk+1) + (p-2)^2$.
\end{proposition}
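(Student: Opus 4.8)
The plan is to verify condition~\ref{enumit:step2} for this last family exactly as in the three preceding propositions, so that Theorem~\ref{thm:CGHVCond} applies. Condition~\ref{enumit:step1} is already in hand: by Lemma~\ref{lem:pk+1_step1}, taking $x = b^{-k}a$ and $y = a^{-1}b^{k+1}$, the generator $x$ is a meridian, $x$ and $y$ generate $G_{p,pk+1}^{p-2,1}$, and every homomorphism $\Phi$ to $\Homeo+(\R)$ with $\Phi(x)t>t$ for all $t$ satisfies $\Phi(y)t\geq t$ for all $t$. So the only thing left to do is rewrite the surface framing $s$ as a word in $x$ and $y$ with only positive powers and at least one $x$.

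First I would specialize the expression for $s$ from Proposition~\ref{prop:pk+1} to $\ell=p-2$, $m=1$. Since then $p-\ell=2$ and $k(p-\ell)+1=2k+1$, the inner block collapses to $(b^{2k+1}a^{-2})a = b^{2k+1}a^{-1}$, giving
\[s = (b^{2k+1}a^{-1})^{p-2}a^2.\]
This word carries negative powers of $a$, so the substance of the step is to clear them. Here I expect the only real subtlety: if one mimics the $M_{p,pk-1}^{p-2,1}(r)$ proposition and pushes the group relation of Proposition~\ref{prop:pk+1} through $s$, one is led to $(b^{2k+1}a^{-2}b^k)^{p-2}b^{2k+1}$, whose repeated block is \emph{not} positive in $x$ and $y$ (for $k=1$ it reduces to $xyxyxyx^{-1}y^{-1}x^{-2}$), so that route stalls. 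Instead I would follow the $M_{p,pk+1}^{p-1,m}(r)$ proposition and regroup directly. The free-group identity $(wu)^{p-2}=w(uw)^{p-3}u$, applied with $w=b^{2k+1}$ and $u=a^{-1}$, together with $u a^2 = a^{-1}a^2 = a$, gives
\[s = b^{2k+1}(a^{-1}b^{2k+1})^{p-3}a.\]
Now each surviving $a^{-1}$ stands immediately to the left of a full $b^{2k+1}$, and $a^{-1}b^{2k+1} = (a^{-1}b^{k+1})b^{k} = y b^{k}$. Substituting this along with $b = xy$ and $a = (xy)^k x$ yields
\[s = (xy)^{2k+1}(y(xy)^k)^{p-3}(xy)^k x,\]
a positive word in $x$ and $y$ containing at least one $x$ (the exponent $p-3$ is nonnegative since $\ell = p-2\geq 1$ forces $p\geq 3$).

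Finally I would invoke Theorem~\ref{thm:CGHVCond}: with $\mu=x$ and $s$ the $v$-framed longitude for $v = p(pk+1)+(p-2)^2>0$, and with $\lambda = \mu^{-p(pk+1)-(p-2)^2}s$, the theorem yields that $G_{p,pk+1}^{p-2,1}(r)$ is not left-orderable for all $r \geq p(pk+1)+(p-2)^2$, matching the claimed bound. The hard part is therefore not the surgery machinery but the bookkeeping above: recognizing that the direct regrouping, rather than the relation-based manipulation used for $M_{p,pk-1}^{p-2,1}(r)$, is what clears every negative exponent.
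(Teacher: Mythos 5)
Your proposal is correct and is essentially identical to the paper's own proof: the paper also specializes Proposition~\ref{prop:pk+1} to get $s = (b^{2k+1}a^{-1})^{p-2}a^2$, regroups directly (not via the group relation) to $b^{2k+1}(a^{-1}b^{2k+1})^{p-3}a$, rewrites this as the positive word $(xy)^{2k+1}(y(xy)^k)^{p-3}(xy)^kx$, and applies Theorem~\ref{thm:CGHVCond} with $\lambda = \mu^{-p(pk+1)-(p-2)^2}s$. Your extra observations (that the relation-based manipulation used for $M_{p,pk-1}^{p-2,1}(r)$ would stall here, and that $p\geq 3$ makes the exponent $p-3$ nonnegative) are accurate but not needed beyond what the paper records.
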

\begin{proof}
It is again sufficient to check that $s$ can be written with only
positive powers of $x$ and $y$, with $x$ and $y$ as in Lemma~\ref{lem:pk+1_step1}.
Using the expression for $s$ in Proposition~\ref{prop:pk+1} with $\ell=p-2$
and $m=1$, we get
\[s = (b^{2k+1}a^{-1})^{p-2}a^2 = b^{2k+1}(a^{-1}b^{2k+1})^{p-3}a = (xy)^{2k+1}(y(xy)^k)^{p-3}(xy)^kx\]
which contains only positive powers of $x$ and $y$ and at
least one $x$. We can then apply Theorem \ref{thm:CGHVCond}, noting that
$\lambda = \mu^{-p(pk+1)-(p-2)^2}s$.
\end{proof}

\bibliography{REU}{}
\bibliographystyle{amsalpha}
\end{document}